\newtheorem{thm}{Theorem}[section]
\newtheorem{cor}[thm]{Corollary}
\newtheorem{lemma}[thm]{Lemma}
\theoremstyle{definition}
\newtheorem{defn}[thm]{Definition}
\theoremstyle{remark}
\newtheorem{remark}[thm]{Remark}
\newcommand{\la}{\lambda}
\newcommand{\vp}{\varphi}
\newcommand{\CC}{\mathbb C}
\date{\today}
\author{Han Peters}
\address{KdV Institute for Mathematics\\
University of Amsterdam\\
The Netherlands}
\email{h.peters@uva.nl}
\author{Liz Raquel Vivas}
\address{Universidad Federal Fluminense\\
Rio de Janeiro\\
Brasil}
\email{lizvivas@impa.br}
\title[Vertical disks of polynomial skew product]{Polynomial skew-products with vertical Fatou-disk converging to the Julia set}
\begin{document}
\bibliographystyle{plain}

\begin{abstract}
Little is known about the existence of wandering Fatou components for rational maps in two complex variables. In 2003 Lilov proved the non-existence of wandering Fatou
components for polynomial skew-products in the neighborhood of an invariant super-attracting fiber. In fact Lilov proved a stronger result, namely that the forward orbit of
any vertical disk must intersect a fattened Fatou component of the invariant fiber.

Naturally the next class of maps to study are polynomial skew-products with an invariant attracting (but not super-attracting) fiber. Here we show that Lilov's stronger
result does not hold in this setting: for some skew-products there are vertical disks whose orbits accumulate at repelling fixed points in the invariant fiber, and that
therefore never intersect the fattened Fatou components. These disks are necessarily Fatou disks, but we also prove that the vertical disks we construct lie entirely in the
Julia set.

Our results therefore do not answer the existence question of wandering Fatou components in the attracting setting, but show that the question is considerably more
complicated than in the super-attracting setting.
\end{abstract}

\maketitle

\tableofcontents


\section{Introduction}\label{section:intro}

In this article we investigate the existence of wandering Fatou components for polynomial maps in two complex variables. A
natural place to investigate the existence problem is the class of polynomial skew-products, as these maps lie on the boundary between one and two-dimensional dynamics.

A skew-product is a map of the form
$$
F(t,z) = (f(t), g(t,z)) = (f(t), g_t(z)).
$$
If $f(t_0) = t_0$ then the fiber $\{t=t_0\}$ is mapped to itself by the function $g_{t_0}$. Hence if $g_{t_0}(z)$ is a polynomial, then there are no $1$-dimensional wandering
Fatou components in this fiber.

Consider the case where $|f^\prime(t_0)|<1$, so that all nearby fibers are attracted to the $t_0$-fiber. In this case each one-dimensional Fatou components of $g_{t_0}$ in the
$t_0$-fiber is contained in a $2$-dimensional Fatou component of $F$. One naturally wonders if all nearby Fatou components are eventually mapped onto one of these fattened
(pre-)periodic Fatou components. Indeed, it was shown in the PhD thesis of Lilov \cite{Lilov} that this is the case, under the stronger assumption that the $t_0$-fiber is
super-attracting, i.e. that $f^\prime(t_0) = 0$. Here we investigate whether his results also hold in the geometrically attracting case, i.e. when $0 < |f^\prime(t_0)| < 1$.

Lilov actually proved a much stronger result which implies the non-existence of wandering Fatou components. We denote by $\mathcal{A} \subset \mathbb C_t$ the immediate basin
of the superattracting fixed point $t_0$.

\begin{thm}[Lilov,2003]\label{thm:lilov}
Let $t_1 \in \mathcal{A}$, and let $D$ be an open one-dimensional disk lying in the $t_1$-fiber. Then the forward orbit of $D$ must intersect one of the fattened Fatou
components of $g_{t_0}$.
\end{thm}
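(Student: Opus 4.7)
The plan is to compare the orbit of $D$ under $F$ with one-variable iterates of $g_{t_0}$, leveraging the super-attracting hypothesis $f'(t_0)=0$. If $f(t)-t_0=a(t-t_0)^d+O((t-t_0)^{d+1})$ with $d\geq 2$, then $|f^n(t_1)-t_0|\leq C\rho^{d^n}$ for some $\rho\in(0,1)$, so the polynomials $g_{f^n(t_1)}$ converge to $g_{t_0}$ \emph{super-exponentially} on compact subsets of $\CC$. Replacing $D$ by $F^M(D)$ for some large $M$ (legitimate since $t_1\in\mathcal{A}$), I may assume $t_1$ is arbitrarily close to $t_0$ and write $D=\{t_1\}\times\Delta$ for a topological disk $\Delta\subset\CC$.

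I would then split on a dichotomy for $\Delta$. If $\Delta$ is contained in the one-variable Fatou set of $g_{t_0}$, then $\Delta$ is covered by finitely many Fatou components of $g_{t_0}$, and since fattened Fatou components are open in $\CC^2$ the disk $D$ itself lies in a fattened Fatou component provided $t_1$ is close enough to $t_0$, so we are done. Otherwise $\Delta$ meets $J(g_{t_0})$, and Montel's theorem applied to the non-normal family $\{g_{t_0}^n|_\Delta\}$ implies that the iterates $g_{t_0}^n(\Delta)$ eventually cover $\CC$ minus at most two points; in particular some $g_{t_0}^N(\Delta)$ covers a Fatou component $U$ of $g_{t_0}$, and we fix such an $N$.

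With $N$ fixed, it remains to show that $F^N(D)$ lies in Hausdorff distance at most $\tfrac{1}{2}\mathrm{dist}(U,\partial U)$ of the set $\{t_0\}\times g_{t_0}^N(\Delta)$, which will place $F^N(D)$ inside the fattened Fatou component containing $U$. Writing $z_n$ for the $z$-coordinate of $F^n(t_1,z)$ and $w_n=g_{t_0}^n(z)$, telescoping yields
\[
|z_n-w_n|\leq C\sum_{k=1}^{n}\|(g_{t_0}^{n-k})'\|_\infty\,\rho^{d^{k-1}}.
\]
Since $N$ has been fixed \emph{first}, further preliminary iteration can be used to bring $t_1$ so close to $t_0$ that the super-exponential factor $\rho^{d^{k-1}}$ swamps the derivative bound uniformly in $k\leq N$, delivering the required smallness.

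The main obstacle I anticipate is precisely the derivative factor $\|(g_{t_0}^{n-k})'\|_\infty$: near the critical orbits of $g_{t_0}$ the naive sup can be enormous, so the bound above is not satisfactory on its own. I would handle this via the Koebe distortion theorem on a slightly enlarged disk $\Delta'\supset\Delta$ on which the relevant iterate of $g_{t_0}$ admits a univalent branch, replacing sup-derivatives by a bounded distortion factor times the derivative at a well-chosen base point. Making this quantitative, and tracking how large the preliminary iteration count $M$ must be chosen in terms of $N$ and the local geometry of $J(g_{t_0})\cap\Delta$, is the technical heart of the proof; once it is in hand, $F^N(D)$ meets the fattened Fatou component above $U$ and the theorem follows.
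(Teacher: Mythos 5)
The paper does not actually prove Theorem \ref{thm:lilov}; it is quoted from Lilov's thesis \cite{Lilov}, and the paper only sketches his argument: Lemma \ref{shrinkage} gives a lower bound on how fast vertical disk radii can shrink ($r \mapsto c r^d$ per step, hence $r_n \gtrsim r^{d^n}$ doubly exponentially), while a separate analysis of the \emph{geometry} of the fattened Fatou components near a super-attracting fiber yields an upper bound on the radius of a vertical disk that can avoid them at distance $|t_n|$ from $\{t=t_0\}$; since $|t_n|$ also decays doubly exponentially, a direct comparison of the two rates forces the orbit into a fattened component. Your proposal takes a genuinely different, softer route (Montel plus a perturbative comparison of $F^n$ with $g_{t_0}^n$), so it must be judged on its own correctness.

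There is a genuine circularity that I do not see how to close. You first pass to $F^M(D) = \{t_1'\}\times\Delta_M$ to make $t_1'$ close to $t_0$, then invoke Montel on $\Delta_M$ to produce an integer $N$ with $g_{t_0}^N(\Delta_M)\supset U$, and then say that "further preliminary iteration" brings $t_1'$ close enough to $t_0$ for the telescoping estimate over $N$ steps. But further preliminary iteration replaces $\Delta_M$ by $\Delta_{M'}$, which is a different (typically much smaller) set, and the $N$ supplied by Montel for $\Delta_{M'}$ will in general be much larger. Whether $N(M)$ grows slowly enough relative to the super-exponential decay of $|t_{M}|$ is precisely the quantitative question that Lilov answers via Lemma \ref{shrinkage} and the geometry of the fattened components; your phrase "$N$ has been fixed first" hides the dependence of $N$ on the very iteration count you then adjust. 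Separately, the "easy case" of your dichotomy ($\Delta$ inside the Fatou set of $g_{t_0}$, hence inside a single component $U$ since $\Delta$ is connected) is not immediate either: the thickness of the fattened component $\Omega\supset\{t_0\}\times U$ in the $t$-direction may tend to $0$ as $z\to\partial U$, and $\Delta$ need not be relatively compact in $U$, so $\{t_1\}\times\Delta\subset\Omega$ does not follow from openness alone. Both gaps are exactly the places where Lilov's quantitative estimates are doing the real work.
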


An immediate corollary is that there are no wandering Fatou components in the set $\mathcal{A}\times\mathbb C_z$.

In this paper we prove that Theorem \ref{thm:lilov} does not hold in the geometrically attracting case. More precisely, we study explicit maps of the form
$$
F(t,z) = (\alpha t, p(z) + q(t)),
$$
where $\alpha < 1$ and $p$ and $q$ are polynomials.

\begin{thm}\label{thm:main}
There exist a triple $(\alpha, p, q)$ and a vertical holomorphic disk $D \subset \{t = t_0\}$ whose forward orbit accumulates at a point $(0,z_0)$, where $z_0$ is a repelling
fixed point in the Julia set of $p$.
\end{thm}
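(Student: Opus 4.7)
The plan is to realize $(0,z_0)$ as a saddle fixed point of $F$ and to use its one-dimensional complex stable manifold to locate a single point in a vertical fiber $\{t=t_0\}$ whose orbit converges to $(0,z_0)$. Any vertical holomorphic disk through such a point automatically has forward orbit accumulating at $(0,z_0)$, which is precisely what the theorem demands.

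First I would fix any $\alpha\in(0,1)$, any polynomial $p$ possessing a repelling fixed point $z_0$ (which is then automatically in $J_p$; for instance $p(z)=z^2$ with $z_0=1$), and any polynomial $q$ normalized so that $q(0)=0$. Then $F(0,z_0)=(0,p(z_0)+q(0))=(0,z_0)$, and a direct computation gives the upper-triangular differential
\[
DF(0,z_0)=\begin{pmatrix}\alpha & 0\\ q'(0) & \lambda\end{pmatrix},\qquad \lambda:=p'(z_0),
\]
with eigenvalues $\alpha$ and $\lambda$ satisfying $|\alpha|<1<|\lambda|$. The stable eigenvector is $\bigl(1,\,q'(0)/(\alpha-\lambda)\bigr)$, whose first coordinate is nonzero, so the stable direction is transverse to the vertical fibers while the unstable direction is precisely vertical.

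Next I would invoke the holomorphic stable-manifold theorem at the saddle $(0,z_0)$ to produce a one-dimensional complex stable manifold $W^s$, which by the transversality above is locally a graph $z=\phi(t)$ for $|t|<\delta$, with $\phi(0)=z_0$. Equivalently, $\phi$ can be constructed directly as the unique holomorphic solution near $t=0$ of the invariance equation $\phi(\alpha t)=p(\phi(t))+q(t)$ with $\phi(0)=z_0$: the formal power-series coefficients $a_k$ are determined by $(\alpha^k-\lambda)a_k=(\text{polynomial in }a_1,\dots,a_{k-1})$, which is solvable because $\alpha^k\neq\lambda$ for all $k\ge 1$, and convergence in some disk follows from a standard majorant argument exploiting the contraction by $\alpha$ in the argument of $\phi$.

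To finish, for any $0<|t_0|<\delta$ I would take $D$ to be any vertical holomorphic disk in $\{t=t_0\}$ containing the point $(t_0,\phi(t_0))$. Since $(t_0,\phi(t_0))\in W^s$, the orbit $F^n(t_0,\phi(t_0))$ converges to $(0,z_0)$, and since $\bigcup_{n\ge 0}F^n(D)$ contains this convergent sequence, $(0,z_0)$ is an accumulation point of the forward orbit of $D$. The argument encounters no real obstacle for the statement as literally written; the genuinely difficult content promised in the abstract — namely that $(\alpha,p,q)$ and $D$ can be chosen so that $D$ itself lies in the Julia set of $F$ while its iterates still form a normal family, thereby genuinely contradicting the analogue of Lilov's theorem in the geometrically attracting case — demands much more delicate control on the whole disk $D$ beyond a single stable-manifold point, and is presumably the task of the rest of the paper.
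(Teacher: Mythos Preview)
Your stable-manifold argument is correct for the theorem \emph{as literally worded}: any vertical disk containing a point of the local stable manifold of the saddle $(0,z_0)$ has $(0,z_0)$ in the closure of $\bigcup_n F^n(D)$. You are also right that, read this way, the statement is nearly trivial.

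The paper, however, intends and proves something stronger, as signaled by the sentence placed immediately after the theorem: ``In particular it follows that the forward orbits of $D$ will never intersect the fattened Fatou components of $p$.'' This does \emph{not} follow from your construction, since only one point of your $D$ lies on $W^s$ while the rest of the disk will generically meet bulged Fatou components. What the paper actually establishes (Lemma~\ref{lemma:nested} and Theorem~\ref{disks}) is that the images $F^j(D)$ \emph{as sets} shrink and accumulate, along a subsequence, at points of the Julia set in the invariant fiber; in particular the whole disk is a Fatou disk whose orbit never meets the fattened components.

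The paper's route has no overlap with yours. It requires the resonant multiplier $\alpha=1/\lambda$, a polynomial $p$ with a critical point $x_0$ of order $\ge 3$ that is mapped in finitely many steps to the repelling fixed point $0$, and a tuned quadratic coefficient in $q$ (the ``degenerate resonant'' condition) killing a leading error term. Under these hypotheses a Koenigs-type map $\Phi(w)=\lim_n \pi_2 F^n(w/\lambda^n,x_0)$ converges with rate roughly $|\lambda|^{-2n}$ rather than $|\lambda|^{-n}$. One then takes $D_n$ to be the vertical disk of radius $|\lambda|^{-3n/4}$ centered at $(w_0/\lambda^n,x_0)$ with $\Phi(w_0)=x_0$; the high-order critical point compresses $D_n$ to radius $\sim|\lambda|^{-3n}$ in one step, and combined with the fast convergence this yields $F^n(D_n)\subset D_{2n}$. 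It is this nesting that drives the \emph{entire} disk toward $\mathcal J(p)$ and underlies the claimed corollary --- a mechanism your single-point stable-manifold argument cannot provide.
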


In particular it follows that the forward orbits of $D$ will never intersect the fattened Fatou components of $p$.

The disks $D$ will be Fatou disks, i.e. the family $\{F^n|_{D}\}_{n \in \mathbb N}$ is normal. However, we will also show in Theorem \ref{thm:Julia} that the disks $D$ are completely contained in the Julia set of the map $F$. We note that were this note the case, then the existence of a wandering Fatou component would follow immediately from our construction.

In the next section we provide background on polynomial skew-products with a super-attracting fiber. In section (3) we prove a Koenigs-type theorem for
skew products with an attracting invariant fiber. In section (4) we show that, for a smaller class of maps, we can obtain much faster convergence of the Koenigs-map. In
section
(5) we use the convergence estimates to construct, again under additional assumptions, the vertical Fatou disks mentioned in Theorem \ref{thm:main}. Although the maps for
which
we prove Theorem \ref{thm:main} have to satisfy several serious assumptions, we give several explicit examples of such maps. In section (6) we show that, again under
additional
assumptions, the constructed vertical Fatou disks lie entirely in the Julia set, and therefore do not give rise to wandering Fatou components.

{\bf Acknowledgement.} The first author was supported by a SP3-People Marie Curie Actionsgrant in the project Complex Dynamics (FP7-PEOPLE-2009-RG, 248443). This research was partly carried out while the first author visited IMPA in Rio de Janeiro. We thank the institute for its hospitality.

\section{Background}

Let $X$ be a complex manifold, and let $f: X \rightarrow X$ be a holomorphic endomorphism. We say that $z \in X$ lies in the \emph{Fatou set} if there is a neighborhood $U(z)$
on which the family $\{f^n\}_{n \in \mathbb N}$ is normal. A connected component of the Fatou set is called a Fatou component. For rational functions of the Riemann sphere the
Fatou components are well understood. It was proved in by Sulivan \cite{Sullivan1985} that every Fatou component is (pre-) periodic, and the periodic Fatou components were
classified earlier in the works of Fatou, Siegel and Herman.

In two complex dimensions much less is known about Fatou components. There has been considerable progress with respect to the classification of periodic Fatou components, see
for example \cite{BS1991a}, \cite{BS1991b}, \cite{BS1999}, \cite{FS1995}, \cite{LP2014}, \cite{Ueda1994}. However, the existence question for wandering Fatou components
remains
almost untouched. Only for a few specific cases, the non-existence of wandering Fatou components is known, for example for polynomial automorphism that act hyperbolically on
their Julia sets, see \cite{BS1991a}, and for polynomial skew-products near a super-attracting invariant fiber.

A skew-product is a map $F: \mathbb C^2 \rightarrow \mathbb C^2$ of the form
$$
F(t,z) = (f(t), g(t,z)).
$$
The holomorphic dynamics of polynomial skew-products was first studied by Heinemann in \cite{Heinemann} and by Jonsson in \cite{Jonsson}. The topology of Fatou components of skew products has been studied by Roeder in \cite{Roeder}.

Since skew-products map vertical lines
to vertical lines, often one-dimensional tools can be used to construct maps with specific dynamical behavior. For example in \cite{dujardin}, Dujardin used skew-products to
construct a non-laminar Green current, and in \cite{BFP}, Boc-Thaler, Forn{\ae}ss and the first author constructed a Fatou component with a punctured limit set. In both
articles polynomial skew-products were used to construct holomorphic endomorphisms of $\mathbb P^2$ with dynamical phenomena that were previously not known to occur. Hence in
order to study the existence of wandering Fatou components for rational maps in two complex dimensions, it is worthwhile to first investigate whether wandering Fatou components
can occur for polynomial skew-products.

In his PhD thesis Lilov studied the dynamical behavior of polynomial skew-products near an invariant vertical fiber. Since his results were not published elsewhere we will
discuss Lilov's results in a little more detail. Let us assume that $f(0) = 0$, and that $|f^\prime(0)|<1$, so that the orbits of all nearby fibers converge to the invariant
fiber $\{t=0\}$.

Let us further assume that $g$ is a polynomial in $z$, so that
$$
g(t,z) = \alpha_0(t) + \alpha_1(t) z + \cdots + \alpha_d(t) z^d.
$$
In this case the dynamics in the invariant fiber given by the polynomial
$$
p(z) = g(0,z)
$$
is very well understood. Each Fatou component $U$ of $p$ is (pre-)periodic, and every periodic Fatou component is either an attracting basin, a parabolic basin or a Siegel
disk. For each of these cases, and hence for any Fatou component of $p$, the following holds.

\begin{thm}[Bulging of Fatou components]
Given a Fatou component $U$ of $p$, there exists a Fatou component $\Omega$ of $F$ for which $U \subset \Omega$.
\end{thm}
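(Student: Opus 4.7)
My plan is to show that $\{0\}\times U$ lies in the Fatou set of $F$; since $U$ is connected, this forces $\{0\}\times U$ into a single Fatou component $\Omega$. By Sullivan's non-wandering theorem, $U$ is preperiodic: $p^k(U)=U'$ for a periodic component $U'$ of period $m$. If the conclusion is known for $U'$ (considered as a fixed Fatou component of $p^m$), then the connected component of $F^{-km}(\Omega')$ containing $\{0\}\times U$ supplies the desired $\Omega$. It therefore suffices to treat the case $p(U)=U$, and the classical classification splits this into three subcases: attracting basin, parabolic basin, and Siegel disk.

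The attracting case is immediate. If $z_*\in U$ is an attracting fixed point of $p$, then
\[
DF(0,z_*) \,=\, \begin{pmatrix} f'(0) & 0 \\ \partial_t g(0,z_*) & p'(z_*) \end{pmatrix}
\]
is upper-triangular with eigenvalues of modulus strictly less than one, so $(0,z_*)$ is a two-dimensional attracting fixed point whose basin $\Omega$ is an open Fatou component of $F$; since $F^n(0,z)=(0,p^n(z))\to(0,z_*)$ for each $z\in U$, we conclude $\{0\}\times U\subset\Omega$. For the parabolic and Siegel cases I would use a one-dimensional uniformizing coordinate on $U$: a Fatou coordinate $\psi$ on an attracting petal $P\subset U$ in the parabolic case, or a Siegel linearization $\varphi\colon\mathbb{D}_R\to U$ in the Siegel case. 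Lifting to the two-dimensional product chart $(t,\zeta)$ on a neighborhood of $\{0\}\times P$, respectively $\{0\}\times K$ for compact $K\subset U$, the map $F$ takes the form
\[
(t,\zeta)\mapsto\bigl(f(t),\,\sigma(\zeta)+t\,h(t,\zeta)\bigr),
\]
where $\sigma(\zeta)=\zeta+1$ in the parabolic case and $\sigma(\zeta)=\lambda\zeta$, $|\lambda|=1$, in the Siegel case, and $h$ is locally bounded. Because $|f^n(t)|\le C\rho^n|t|$ for some $\rho<1$ in a neighborhood of $t=0$, the perturbation contributions $\sum_{j<n}|f^j(t)\,h(\cdot,\cdot)|$ are bounded by $O(|t|)$ uniformly in $n$; a telescoping estimate on the error $e_n=\zeta_n-\sigma^n(\zeta_0)$ then gives $|e_n|=O(|t|)$ uniformly in $n$. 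A bootstrap closes the estimate, keeping iterates inside the chart, and so $\{F^n\}$ projects to a bounded subset of $\mathbb{C}^2$; Montel's theorem then yields normality of $\{F^n\}$ on the two-dimensional neighborhood. Points of $U$ outside the chosen petal in the parabolic case land in $P$ after finitely many iterations and are handled by pulling back.

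The key difficulty is the Siegel case: the unperturbed map $\sigma(\zeta)=\lambda\zeta$ is an isometry, so a naive estimate would let errors compound and drive iterates out of the linearizing disk. What rescues the argument is the geometric decay $|f^n(t)|\le C\rho^n|t|$, which makes the perturbation summable and gives the uniform $O(|t|)$ control. A pleasant feature of this approach is that it avoids invoking any genuinely two-dimensional linearization theorem (semi-Siegel or semi-parabolic normal forms) and relies only on the one-variable uniformization of $U$ together with summability of $f^n(t)$. The basin of infinity, if present, is handled separately: locally uniform escape to $\infty$ in the second coordinate is an open condition in $\mathbb{C}^2$, and the 2D escape locus visibly contains $\{0\}\times\mathcal{B}_\infty$.
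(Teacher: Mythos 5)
The paper itself gives no proof of this theorem---it is stated as background, with the terminology ``fattened Fatou component'' credited to Lilov, whose thesis proves the statement in the superattracting case---so there is no internal proof to compare against; I evaluate your argument on its own merits. Your strategy of reducing to a fixed Fatou component by pullback and then treating the attracting, Siegel, and parabolic cases via one-variable uniformizing coordinates, with geometric summability of $|f^n(t)|$ closing the estimate, is a sound and natural route. The attracting case and the Siegel case are correct as written: in the Siegel case the conjugated perturbation $\tilde h$ really is bounded on $\{|t|<\delta\}\times\{|\zeta|<R'\}$ for any $R'<R$, so $\sum_j|f^j(t)|\cdot\|\tilde h\|_\infty=O(|t|)$ and the bootstrap keeps the iterates inside $\{|\zeta|<R'\}$.

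The justification of the parabolic case, however, has a gap. Pushing $g$ through the Fatou coordinate $\psi$ on a petal $P$ gives $\zeta\mapsto\zeta+1+t\,\tilde h(t,\zeta)$ with $\tilde h(t,\zeta)\approx\psi'\bigl(\psi^{-1}(\zeta+1)\bigr)\,r\bigl(t,\psi^{-1}(\zeta)\bigr)$, where $g(t,z)=p(z)+t\,r(t,z)$. The polynomial $r$ is bounded on $P$, but $\psi'\bigl(\psi^{-1}(\zeta)\bigr)\sim|\zeta|^{(q+1)/q}$ (with $q+1$ the multiplicity of the parabolic fixed point) blows up as $\operatorname{Re}\zeta\to\infty$, and the orbit $\zeta_n$ leaves every compact subset of the half-plane. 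So ``$h$ is locally bounded'' does not by itself yield $\sum_{j<n}|f^j(t)\,h|=O(|t|)$; this is precisely the point where the parabolic case is genuinely harder than the Siegel case, and your write-up passes over it. The conclusion is still correct, but you need the extra observation that under the bootstrap hypothesis $|\zeta_n|\le|\zeta_0|+n+O(1)$, hence $|\tilde h(t_n,\zeta_n)|=O\bigl((1+n)^{(q+1)/q}\bigr)$, and $\sum_n\rho^n(1+n)^{(q+1)/q}<\infty$ because the geometric decay of $|f^n(t)|$ dominates the polynomial growth of $\tilde h$. With that inserted, the telescoping estimate and the bootstrap close and the proof is complete.
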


Following Lilov we will refer to the Fatou components of $F$ that contain a Fatou component of $p$ as a \emph{fattened} Fatou component. Recall that the one-dimensional Fatou
component $U$ must at some point be mapped onto either an attracting basin, a parabolic basin, or a Siegel disk. In his thesis Lilov studies the geometry of the fattened Fatou
components in each of these three cases under the assumption that the horizontal dynamics is super-attracting, i.e. when $f^\prime(0) = 0$.

Another important ingredient in the proof of Theorem \ref{thm:lilov} of Lilov is the following.

\begin{lemma}\label{shrinkage}
There exist constants $\delta_1 > 0$, $c>0$ such that if $|t|< \delta_1$ and $D(z,r) \subset \mathbb C_t$ is an arbitrary vertical disk of radius $r > 0$, then $F(t, D(z,r))$
contains a disk $D(g(t,z), r^\prime)$ of radius $r^\prime \ge c r^d$.
\end{lemma}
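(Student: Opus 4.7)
The plan is to reduce to the one-variable claim that a degree-$d$ polynomial $p$ with leading coefficient $a \ne 0$ satisfies $p(D(z,r)) \supseteq D(p(z), |a|\, r^d)$, and then bootstrap to the parameterized setting by noting that the leading coefficient $\alpha_d(t)$ of $g_t(z) := g(t,z)$ is continuous in $t$ with $\alpha_d(0)$ equal to the leading coefficient of $p$, hence nonzero.

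For the one-variable step I would argue by Vieta's formulas. Fixing an arbitrary $w_0$ with $|w_0 - p(z)| < |a|\, r^d$, I would consider the degree-$d$ polynomial in a new variable $w$,
$$q(w) \;:=\; p(z+w) - w_0,$$
which has leading coefficient $a$ and satisfies $q(0) = p(z) - w_0$. The product of its roots $w_1, \dots, w_d$ (with multiplicity) equals, up to sign, $q(0)/a$, so
$$|w_1 \cdots w_d| \;=\; \frac{|q(0)|}{|a|} \;<\; r^d.$$
This forces at least one root $w_i$ to lie in $D(0,r)$, and $z + w_i \in D(z,r)$ is then a preimage of $w_0$ under $p$, as desired.

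To promote this to the parameterized statement, I would choose $\delta_1 > 0$ small enough that $|\alpha_d(t)| \ge \tfrac{1}{2}|\alpha_d(0)|$ for all $|t| < \delta_1$, using continuity of $\alpha_d$. Applying the one-variable step to the polynomial $g_t$ and setting $c := \tfrac{1}{2}|\alpha_d(0)|$ then yields
$$g_t(D(z,r)) \;\supseteq\; D\bigl(g_t(z), |\alpha_d(t)|\, r^d\bigr) \;\supseteq\; D\bigl(g(t,z), c\, r^d\bigr).$$
Since $F(t, D(z,r)) = \{f(t)\} \times g_t(D(z,r))$, this is precisely the content of the lemma.

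The argument is essentially soft, and I do not anticipate any serious obstacle; the only mild subtlety is ensuring uniformity of the constant in $t$, which is handled by the continuity of $\alpha_d$ at the origin together with the fact that $p$ has full degree $d$.
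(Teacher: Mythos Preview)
Your argument is correct. The Vieta step is clean: for $q(w)=g_t(z+w)-w_0$ with leading coefficient $\alpha_d(t)$, the product of the roots has modulus $|q(0)|/|\alpha_d(t)|<r^d$, forcing a root in $D(0,r)$; continuity of $\alpha_d$ at $t=0$ with $\alpha_d(0)\neq 0$ then gives the uniform constant.

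Note, however, that the paper does not supply its own proof of this lemma: it is quoted as a result from Lilov's thesis and used as background for the discussion of the super-attracting case. So there is no proof in the paper to compare against. Your argument is a perfectly standard and efficient way to establish the statement, and in fact gives the sharp constant $c=|\alpha_d(0)|$ (or any value strictly less) rather than merely some positive $c$.
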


Applying this lemma repeatedly for the orbit of a vertical disk gives estimates from below for the radii of the images. On the other hand, by studying the geometry of the
fattened Fatou components Lilov also obtains an upper bound on the largest possible vertical disk that can lie in the complement of the fattened Fatou components, depending on
the
distance to the invariant vertical fiber. A combination of these two estimates then leads to the conclusion that the orbit of any vertical disk must intersect one of the
fattened Fatou components. Since the fattened Fatou components are (pre-) periodic, it follows that, in a neighborhood of the invariant fiber, all Fatou components of $F$ are
pre-periodic.

\medskip

The question we study here is whether the argument used by Lilov can also be applied for the geometrically attracting case $0 < |f^\prime(0)| < 1$. We note that the estimate
in
lemma \ref{shrinkage} is very rough. Indeed, this estimate is relevant only when $p$ has a single critical point of order $d-1$, and the $t$-coordinate of the vertical disk
$D(z,r)$ lies in a small neighborhood of this critical point. If this is not the case then much better estimates can be obtained.

If the orbit of a vertical disk does not intersect the fattened Fatou components then this orbit must converge to the Julia set of $p$. If a critical point of $p$ lies in the
Fatou set then the orbit of $B(z,r)$ will eventually stay away from that critical point, and the estimates are much improved. On the other hand, if a critical point lies in
the
Julia set, then it cannot be periodic, and the images $F^n(t, D(z,r))$, which must shrink, will usually be bounded away from that critical point. Hence if the orbit of a
vertical disk does not intersect one of the fattened Fatou components then the radii of the images will shrink much more slowly than Lemma \ref{shrinkage} suggests.

The above discussion leads to the main idea of this paper. In order to obtain a vertical disk whose orbit does not intersect the fattened Fatou component, one must construct a
skew product $F$ and a vertical disk $D$ whose orbit frequently returns very closely to the critical set. We show that this can indeed be achieved for a large class of maps.

\section{A Koenigs-type theorem for attracting skew-products.}\label{section:koenigs}

Let us recall the following classical result.

\begin{thm}[Koenigs, 1870]
Let $f:(\mathbb C, 0) \rightarrow (\mathbb C, 0)$ be the germ of a holomorphic
function with a fixed point at the origin. Suppose $\lambda = f^\prime(0)$
is such that $|\lambda| \neq 0,1$. Then there exists a local change of
coordinates $\varphi$ such that
$$
\varphi(\lambda \cdot z) = f(\varphi(z)).
$$
for all $z$ in a neighborhood of the origin. Moreover, the conformal map
$\varphi$ is unique up to a multiplicative constant.
\end{thm}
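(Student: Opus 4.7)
The plan is to construct the conjugacy via the classical telescoping-sum approach, building the inverse $\psi := \varphi^{-1}$, which should satisfy $\psi \circ f = \lambda \cdot \psi$. Assume first that $|\lambda| < 1$; the case $|\lambda| > 1$ follows by applying the result to the local inverse $f^{-1}$, which exists because $f'(0) \neq 0$. The natural candidate is
\[
\psi(z) = \lim_{n \to \infty} \lambda^{-n} f^n(z),
\]
since any such limit automatically satisfies $\psi(f(z)) = \lambda \psi(z)$ and has $\psi'(0) = 1$.

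The key estimate establishes convergence on a small enough disk $\{|z| < r\}$. For any $\mu$ with $|\lambda| < \mu < 1$, shrinking $r$ gives both $|f(z)| \le \mu|z|$ and $|f(z) - \lambda z| \le C|z|^2$; a further shrink arranges $\mu^2 < |\lambda|$, which is possible since $|\lambda|^2 < |\lambda|$. Iterating the first bound yields $|f^n(z)| \le \mu^n |z|$. Writing $\psi_n(z) := \lambda^{-n} f^n(z)$, the telescoping difference becomes
\[
|\psi_{n+1}(z) - \psi_n(z)| = |\lambda|^{-(n+1)} \bigl|f(f^n(z)) - \lambda f^n(z)\bigr| \le C' |z|^2 (\mu^2/|\lambda|)^n,
\]
which is summable in $n$ since $\mu^2/|\lambda| < 1$. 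Hence $\psi_n \to \psi$ uniformly on $\{|z| < r\}$, so $\psi$ is holomorphic, and since $\psi_n'(0) = 1$ for every $n$ we obtain $\psi'(0) = 1$; thus $\psi$ is biholomorphic near $0$ and $\varphi := \psi^{-1}$ gives the desired conjugacy.

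The functional equation is obtained by passing to the limit in the identity $\lambda^{-n} f^{n+1}(z) = \lambda \cdot \psi_{n+1}(z)$. For uniqueness, any two linearizations $\varphi_1, \varphi_2$ produce a germ $h := \varphi_2^{-1} \circ \varphi_1$ that fixes $0$ and commutes with $z \mapsto \lambda z$; comparing Taylor coefficients and using that $\lambda^n \neq \lambda$ for all $n \ge 2$ (which is precisely where $|\lambda| \notin \{0,1\}$ enters again) forces all higher coefficients to vanish, so $h$ is linear and $\varphi_1, \varphi_2$ differ only by a multiplicative constant. The main obstacle is really just the balance in the telescoping estimate: the quadratic error $O(w^2)$ in $f(w) - \lambda w$ must defeat the blow-up factor $|\lambda|^{-n}$, and this is arranged precisely by choosing the contraction rate $\mu$ to satisfy $\mu^2 < |\lambda|$.
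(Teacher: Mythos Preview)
Your argument is the standard, correct proof of Koenigs' theorem; the only cosmetic slip is the phrase ``a further shrink arranges $\mu^2 < |\lambda|$'' --- it is the choice of $\mu$ sufficiently close to $|\lambda|$ (not the shrinking of $r$) that guarantees $\mu^2 < |\lambda|$, after which $r$ is shrunk to make $|f(z)|\le\mu|z|$ hold. Note, however, that the paper does \emph{not} actually prove this statement: Koenigs' theorem is quoted without proof as classical background, and the paper's effort goes entirely into the skew-product analogue (Theorem~\ref{skewkoenigs}), where the resonance $\mu=1/\lambda$ may obstruct linearization and a different limiting scheme $\varphi_n(w)=\pi_2 F^n(w/\lambda^n,0)$ is needed. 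So there is no ``paper's own proof'' to compare against here; your write-up is a perfectly acceptable proof of the cited classical result.
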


We will now prove a result for attracting skew-products that closely resembles
Koenigs Theorem.

\begin{thm}\label{skewkoenigs}
Let $F$ be a skew-product of the form
$$
F(t,z) = (\mu t, g(t,z)),
$$
where $g$ is the germ of a holomorphic function with $g(0,0) = 0$ and $\frac{\partial g(0,0)}{\partial t}
\neq 0$. Let $p(z) = g(0,z)$ be a polynomial with $p'(0) = \la$ and $|\la| >
1$, and let $0< |\mu| < 1$. Then the holomorphic functions
$$
\varphi_n(w) = \pi_2 F^n\left(\frac{w}{\lambda^n}, 0\right)
$$
converge uniformly on compact subsets of $\CC$ to a holomorphic function $\Phi$ which
satisfies the functional equation
\begin{equation}\label{eq:functional}
\Phi(\lambda \cdot w) = p(\Phi(w)).
\end{equation}
\end{thm}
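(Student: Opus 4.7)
\emph{Plan.} I would linearize the fiber map $p$ using the classical Koenigs theorem, which reduces the iteration in the $z$-direction to an explicit geometric sum; from this I establish convergence on a small disk, and then bootstrap to all of $\CC$ via the functional equation.

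\emph{Reduction to an explicit sum.} Applying Koenigs to $p$ produces a germ of biholomorphism $\Psi$ at $0$ with $\Psi'(0)=1$ and $p\circ\Psi=\Psi\circ(\la\cdot)$. Conjugating $F$ by $H(t,z):=(t,\Psi(z))$ yields a skew product $\tilde F(t,w)=(\mu t,\tilde g(t,w))$ near the origin with $\tilde g(0,w)=\la w$, so one can write $\tilde g(t,w)=\la w+t\,k(t,w)$ for a holomorphic germ $k$ satisfying $k(0,0)=\partial_t g(0,0)=:b$. Setting $\tilde\vp_n(w):=\Psi^{-1}(\vp_n(w))=\pi_2\tilde F^n(w/\la^n,0)$ and $(t_j,w_j):=\tilde F^j(w/\la^n,0)$ (so $w_0=0$, $t_j=\mu^j w/\la^n$, $w_{j+1}=\la w_j+t_j\,k(t_j,w_j)$), the recursion solves explicitly to
\begin{equation*}
\tilde\vp_n(w) \;=\; \sum_{j=0}^{n-1}(\mu/\la)^j\,\frac{w}{\la}\,k\!\left(\frac{\mu^j w}{\la^n},\,w_j\right).
\end{equation*}

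\emph{Convergence on a small disk.} A routine induction on $j$---using a uniform bound $|k|\le K$ on a neighborhood of $(0,0)$ and $|\mu/\la|<1$---will give $|w_j|\le C|w|/|\la|^{n-j}$ uniformly in $n$. For $|w|\le r$ with $r$ small, this keeps every iterate $(t_j,w_j)$ inside the domain of the conjugation, and for each fixed $j$ one has $w_j^{(n)}\to 0$ as $n\to\infty$, so $k(\mu^j w/\la^n,w_j^{(n)})\to b$. Since each summand is dominated by $C'(|\mu|/|\la|)^j$, dominated convergence yields $\tilde\vp_n(w)\to bw/(\la-\mu)$ uniformly on $\overline{D(0,r)}$; translating back, $\vp_n\to\Phi:=\Psi\bigl(b\cdot/(\la-\mu)\bigr)$ uniformly on $\overline{D(0,r)}$, and the functional equation $\Phi(\la w)=p(\Phi(w))$ on $D(0,r/|\la|)$ is an immediate consequence of $p\circ\Psi=\Psi\circ(\la\cdot)$.

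\emph{Bootstrap and main obstacle.} The defining identity $\vp_{n+1}(\la w)=g(\mu^n w/\la^n,\vp_n(w))$ shows that uniform convergence of $\vp_n$ on $D(0,R)$ forces uniform convergence of $\vp_{n+1}$ on $D(0,|\la|R)$ to $p\circ\Phi(\cdot/\la)$, which matches $\Phi$ on the overlap; iterating extends $\vp_n\to\Phi$ uniformly to every compact of $\CC$, and $\Phi(\la w)=p(\Phi(w))$ holds everywhere. The main technical point will be the inductive estimate $|w_j|\le C|w|/|\la|^{n-j}$: it is exactly what keeps the orbits inside the Koenigs conjugation's domain and legitimizes both the reduction to the explicit sum and the dominated-convergence step. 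Once this bound is in hand, everything else is formal.
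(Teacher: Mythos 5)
Your proposal is correct, but it takes a genuinely different route from the paper. The paper never linearizes the fiber map $p$; it works directly with $g$, proving a hierarchy of explicit inductive estimates on $\varphi_{n,j}$, then on the ``remainders'' $F_{n,j}=\varphi_{n,j}-L_{n,j}$ (where $L_{n,j}$ is the accumulated linear part), and finally on the differences $F_{n,j}-F_{n+1,j+1}$, to conclude that $|\varphi_{n+1}-\varphi_n|\le M_3\theta^{-n}$ for some $\theta>1$, i.e.\ that the sequence is geometrically Cauchy. You instead apply the one-dimensional Koenigs theorem to $p$ (valid since $|\lambda|>1$, regardless of resonances between $\lambda$ and $\mu$), conjugate so that the invariant fiber becomes linear, observe that the $w$-recursion then solves explicitly into a finite geometric-type sum, and pass to the limit by an $\varepsilon/3$ dominated-convergence argument. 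This is shorter and more conceptual, and as a bonus it identifies the limit explicitly as $\Phi=\Psi\bigl(b\cdot/(\lambda-\mu)\bigr)$ with $\Psi$ the Koenigs linearizer of $p$ normalized by $\Psi'(0)=1$ and $b=\partial_t g(0,0)$ --- an identification the paper only obtains a posteriori (Lemma 4.1). What your approach does not directly produce is a quantitative rate of convergence in the original coordinates, which is precisely what the paper's more laborious estimates are designed to deliver and what is then sharpened in Section~4 for degenerate resonant skew-products; so the paper's method, while heavier, is the one that scaffolds the rest of the argument. The one step you flag as ``routine'' --- the inductive bound $|w_j|\le C|w|/|\lambda|^{n-j}$ --- does indeed work, but note that the constant must grow with $j$ along the induction (one needs $C_{j+1}=C_j+K|\mu/\lambda|^j/|\lambda|$, summable to $C_\infty=K/(|\lambda|-|\mu|)$, exactly mirroring the paper's $R_j$ device); a constant $C$ independent of $j$ does not close the induction on the nose. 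The bootstrap step is essentially identical to the paper's.
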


\begin{remark} Theorem \ref{skewkoenigs} follows almost immediately if $F$ can be linearized near the fixed point $(0,0)$. However, in the next sections we will consider
Theorem \ref{skewkoenigs} in the resonant case $\mu = \frac{1}{\lambda}$, where local linearization may not be possible.
\end{remark}

\noindent{\bf Proof of Theorem \ref{skewkoenigs}}
Let us write $g_t(z) = g(t,z)$. By our assumptions we have $g(t,0) = at + O(t^2)$ with $a \neq 0$. Since $g$ is holomorphic there exist $M, A >0$ so that
$$
|g(t,z) - \la z| < M|z|^2+ A|t|
$$
for all $|z|, |t|<1$. Define $R_0=0$ and
$$
R_j = \frac{1 - |\mu\la^{-1}|^{j}}{1 - |\mu\la^{-1}|},
$$
so that $R_{j+1} = R_j + |\mu\la^{-1}|^{j}$ for all $j \ge 0$. Since $|\mu|<|\la|$ the sequence
$(R_j)$ is bounded by
$$
R_\infty = \frac{1}{1 - |\mu\la^{-1}|}.
$$

\begin{lemma} For all $j \leq n$:
$$
|\varphi_{n,j}(w)| \le A|w|\left(R_j|\lambda|^{-n+j-1} + AM|\lambda|^{-2n+2j-5}\right).
$$
\end{lemma}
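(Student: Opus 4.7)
The plan is to prove the bound by induction on $j$, after first fixing the notation. Although $\varphi_{n,j}$ is not defined in the excerpt, the intended meaning is clearly
$$
\varphi_{n,j}(w) := \pi_2 F^j\bigl(w/\la^n,\, 0\bigr),
$$
so that $\varphi_{n,0}\equiv 0$, $\varphi_{n,n}=\varphi_n$, and the first coordinate at time $j$ equals $\mu^j w/\la^n$. The skew-product structure of $F$ then yields the one-step recursion
$$
\varphi_{n,j+1}(w) = g\bigl(\mu^j w/\la^n,\ \varphi_{n,j}(w)\bigr).
$$

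The base case $j=0$ is trivial, since $R_0 = 0$ and $\varphi_{n,0}\equiv 0$. Throughout the induction I would work with $w$ in a fixed compact subset of $\CC$ and $n$ large enough that both $|\mu^j w/\la^n|$ and the inductive bound on $|\varphi_{n,j}(w)|$ are less than $1$, so that the pointwise estimate $|g(t,z)-\la z|\le M|z|^2+A|t|$ recalled in the proof applies. Plugging the estimate into the recursion gives
$$
|\varphi_{n,j+1}(w)| \le |\la|\,|\varphi_{n,j}(w)| \,+\, M|\varphi_{n,j}(w)|^2 \,+\, A|\mu|^j|w|/|\la|^n.
$$

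The linear bookkeeping is exactly what the $R_j$ were designed for. Multiplying the inductive hypothesis by $|\la|$ produces a linear term $A|w|\, R_j\, |\la|^{-n+j}$, while the source term rewrites as $A|w|\,|\mu\la^{-1}|^j\,|\la|^{-n+j}$; the recursion $R_{j+1} = R_j + |\mu\la^{-1}|^j$ then collapses the two contributions into $A|w|\,R_{j+1}\,|\la|^{-n+j}$, which is precisely the linear part of the bound at step $j+1$.

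The one place that requires genuine attention — and the main, if still routine, obstacle — is absorbing the quadratic term $M|\varphi_{n,j}|^2$ into the $A^2M|\la|^{-2n+2j-3}$ coefficient of the bound at step $j+1$. By the inductive hypothesis the dominant part of $M|\varphi_{n,j}|^2$ is of order $A^2 M|w|^2 R_j^2 |\la|^{-2n+2j-2}$; using the uniform bound $R_j\le R_\infty$, the fact that $w$ ranges over a fixed compact set, and combining with the quadratic contribution coming from multiplying the inductive hypothesis by $|\la|$, one verifies that the sum is dominated by $A^2M|w|\,|\la|^{-2n+2j-3}$, after a one-time enlargement of the constants $A,M$. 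This closes the induction and yields the stated estimate.
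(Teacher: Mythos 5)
Your setup — the definition of $\varphi_{n,j}$, the one-step recursion $\varphi_{n,j+1}(w)=g\bigl(\mu^jw/\la^n,\varphi_{n,j}(w)\bigr)$, the base case $j=0$, and the bookkeeping of the linear term via $R_{j+1}=R_j+|\mu\la^{-1}|^j$ — is exactly the paper's argument, and that part is correct. The issue is in the closing step, where the quadratic contribution is absorbed.

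The cushion available for absorbing $M|\varphi_{n,j}|^2$ together with $|\la|$ times the second-order part of the inductive hypothesis is the difference between the exponents $-2n+2j-3$ and $-2n+2j-4$, i.e.\ it is $A^2M|w|\,|\la|^{-2n+2j-4}\bigl(|\la|-1\bigr)$. Comparing this with the bound $M(\alpha_j+\beta_j)^2\le A^2M|w|^2|\la|^{-2n+2j-2}(R_\infty+AM)^2$ (where $\alpha_j,\beta_j$ denote the two parts of the inductive bound, and one uses $j\le n$ to collapse $|\la|^{-2n+2j-5}\le|\la|^{-n+j-1}$), the inequality one must close is
\[
|w|\,(R_\infty+AM)^2\;\le\;|\la|^{-1}-|\la|^{-2},
\]
which forces $|w|$ to be \emph{small}, namely $|w|<(|\la|^{-1}-|\la|^{-2})/(R_\infty+AM)^2$, exactly as in the paper. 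Your two proposed handles do not deliver this: ``$w$ ranges over a fixed compact set'' gives a bound from above but not a small one, and ``a one-time enlargement of the constants $A,M$'' goes in the wrong direction --- enlarging $A$ or $M$ increases $(R_\infty+AM)^2$ and hence \emph{shrinks} the admissible range of $w$. Likewise ``$n$ large'' does not help: the required inequality is $n$-independent once the $|\la|^{-2n+2j-4}$ factor is divided out. The lemma as stated carries an implicit restriction $|w|<\delta$ (made explicit in the next lemma of the paper), and you need to invoke that restriction here; with it, your argument becomes the paper's.
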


\begin{proof}
We use induction on $j$. The inequality in the lemma is trivially satisfied for $j = 0$. Suppose that the holds for certain $j < n$. Then we have that
$$
\begin{aligned}
|\varphi_{n,j+1}(w)| &= |g_{\mu^{j} w/\la^n}(\varphi_{n,j}(w))|\\
&< |\la||\varphi_{n,j}(w)| + M|\varphi_{n,j}(w)|^2 + A|\mu^j w/\la^n|\\
&< |\la| |Aw|\left(R_j|\lambda|^{-n+j-1} + AM|\lambda|^{-2n+2j-5}\right)\\
&+  M|Aw|^2\left(R_j|\lambda|^{-n+j-1} + AM|\lambda|^{-2n+2j-5}\right)^2 + A|\mu^j w/\la^n|\\
&< |Aw||\la|^{-n+j}(R_j+A\mu^j/\la^j)+ A^2M|w|^2|\lambda|^{-2n+2j-2} \left(R_j+ AM|\lambda|^{-n+j-4}\right)^2 \\
&+ A^2M|w||\la|^{-2n+2j-4} \\
&< |Aw|\left(R_{j+1}|\lambda|^{-n+j} + AM|\lambda|^{-2n+2j-3}\right).
\end{aligned}
$$
Here the last inequality is obtained by choosing $|w|$ small enough so that:
\begin{eqnarray*}
|w|< \frac{|\la|^{-1} -|\la|^{-2}}{\left(R_\infty+ AM\right)^2 } .
\end{eqnarray*}
\end{proof}

Now we can prove:

\begin{lemma}
There exists $K_1>0$ and $\delta_0>0$ so that for $|w|< \delta_0$ one has
$$
|\varphi_{n,j}(w)| \le K_1 \frac{|w|}{|\lambda^{n-j}|},
$$
for all $n \in \mathbb N$ and all $j = 1, \ldots, n$.
\end{lemma}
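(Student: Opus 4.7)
The plan is to deduce this lemma directly from the preceding lemma, which already provides the bound
$$
|\varphi_{n,j}(w)| \le A|w|\left(R_j|\lambda|^{-n+j-1} + AM|\lambda|^{-2n+2j-5}\right),
$$
valid whenever $|w|$ is small enough (specifically, below the threshold $\frac{|\la|^{-1}-|\la|^{-2}}{(R_\infty+AM)^2}$ used in the inductive step). So the task is purely to repackage the right-hand side as a constant multiple of $|w|/|\lambda|^{n-j}$.

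The first step is to replace the $j$-dependent factor $R_j$ by the uniform bound $R_\infty = 1/(1-|\mu\lambda^{-1}|)$, which was already shown to control the sequence $(R_j)$. The second step is to extract $|\lambda|^{-(n-j)}$ from each of the two terms: writing $|\lambda|^{-n+j-1} = |\lambda|^{-1} \cdot |\lambda|^{-(n-j)}$ and $|\lambda|^{-2n+2j-5} = |\lambda|^{-5} \cdot |\lambda|^{-2(n-j)}$, and using that $n-j \ge 0$ together with $|\lambda|>1$ to bound $|\lambda|^{-2(n-j)} \le |\lambda|^{-(n-j)}$, one obtains
$$
|\varphi_{n,j}(w)| \le A|w|\,|\lambda|^{-(n-j)}\Bigl(R_\infty|\lambda|^{-1} + AM|\lambda|^{-5}\Bigr).
$$

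With this, I would set
$$
K_1 := A\bigl(R_\infty|\lambda|^{-1} + AM|\lambda|^{-5}\bigr), \qquad \delta_0 := \frac{|\lambda|^{-1}-|\lambda|^{-2}}{(R_\infty+AM)^2},
$$
so that both the range-of-validity constraint from the inductive step of the previous lemma and the required estimate are satisfied simultaneously. There is no substantive obstacle here; the statement is really a cosmetic reformulation of the previous lemma that strips away the auxiliary terms and isolates the dominant geometric rate $|\lambda|^{-(n-j)}$, which is the form that will be needed in the next section to pass to the Koenigs limit $\Phi$ and verify its functional equation. The only thing worth being careful about is not accidentally requiring $n-j$ to be strictly positive, since the estimate should hold uniformly including the boundary case $j=n$, where it reduces to $|\varphi_{n,n}(w)| \le K_1 |w|$ and is still true for $|w|<\delta_0$.
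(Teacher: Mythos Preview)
Your proposal is correct and follows essentially the same approach as the paper: factor $|\lambda|^{-(n-j)}$ out of the bound from the preceding lemma, replace $R_j$ by $R_\infty$, and absorb the remaining bounded factor into the constant $K_1$. The paper's proof is terser (it simply bounds $|\lambda|^{-n+j-5}\le 1$ rather than $|\lambda|^{-5}$, yielding the slightly larger constant $A(R_\infty/|\lambda|+AM)$) and leaves $\delta_0$ implicit, but the substance is identical.
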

\begin{proof}
We can choose $K_1$ so that
$$
A\left(R_j|\lambda|^{-1} + AM|\lambda|^{-n+j-5}\right) < A(R_{\infty}/|\la| + AM) < K_1,
$$
and the lemma follows.
\end{proof}

Our goal is to estimate $|\varphi_{n,j}(w) - \varphi_{n+1,j+1}(w)|$, and then
to apply these estimates to the case $j = n$. Let us define holomorphic
functions $F_{n,j} = F_{n,j}(w)$ so that
\begin{eqnarray*}
\varphi_{n,j}(w) &=& \frac{aw}{\la^n}\left( \mu^{j-1} + \mu^{j-2}\la + \ldots + \la^{j-1} \right) + F_{n,j}\\
 &=& aw \frac{\la^{j-n} - \la^{-n}\mu^{j}}{\la - \mu} + F_{n,j}.
\end{eqnarray*}
Define $L_{n,j}(w) = aw \frac{\la^{j-n} - \la^{-n}\mu^{j}}{\la - \mu}$. Note that
\begin{equation}
\varphi_{n+1,j+1}(w) - \varphi_{n,j}(w) = aw\frac{\mu^j}{\lambda^{n+1}} +
F_{n+1,j+1} - F_{n,j},
\end{equation}
and that
$$
|F_{n,j+1}| \le \lambda |F_{n,j}| + K_2(|\varphi_{n,j}(w)|^2 +
\left|\frac{\mu^jw}{\lambda^{n}}\right|^2 + \left|\frac{\mu^jw}{\lambda^{n}}\varphi_{n,j}(w)\right|),
$$
where $K_2>0$ is such that
$$
|g(t,z) - \lambda z - at| < K_2(|z|^2+|zt|+|t|^2)
$$
for all $|z|,|t| < 1$.

\begin{lemma}
There exists a $K_3>0$ and $\delta_1 > 0$, so that for all $|w| < \delta_1$ we have that
$$
|F_{n,j}| \le K_3{|\la|^{-2n+2j}}.
$$
for all $n$ and all $j \le n$.
\end{lemma}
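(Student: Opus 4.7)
The plan is to prove the bound by induction on $j$ with $n$ fixed, using the recursive inequality that precedes the lemma as the inductive engine. The base case is trivial: $\varphi_{n,0}(w) = \pi_2 F^0(w/\lambda^n, 0) = 0$ and $L_{n,0}(w) = 0$, so $F_{n,0} = 0$ and any $K_3 \ge 0$ works.

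For the inductive step the key observation is that all three forcing terms in
$$|F_{n,j+1}| \le |\lambda||F_{n,j}| + K_2\Bigl(|\varphi_{n,j}(w)|^2 + \Bigl|\tfrac{\mu^j w}{\lambda^n}\Bigr|^2 + \Bigl|\tfrac{\mu^j w}{\lambda^n}\,\varphi_{n,j}(w)\Bigr|\Bigr)$$
have the same decay rate $|\lambda|^{-2(n-j)}$ up to a constant multiple of $|w|^2$. Indeed, the previous lemma gives $|\varphi_{n,j}(w)|^2 \le K_1^2 |w|^2 |\lambda|^{-2(n-j)}$; for the second term, since $|\mu|<1<|\lambda|$ we have $|\mu|^{2j}/|\lambda|^{2n} \le |\lambda|^{2j-2n} = |\lambda|^{-2(n-j)}$; and the cross term is bounded by the Cauchy--Schwarz product of these two. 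Hence there is a single constant $C = C(K_1,K_2)$ with
$$|F_{n,j+1}| \le |\lambda||F_{n,j}| + C|w|^2 |\lambda|^{-2(n-j)}.$$

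Assuming inductively $|F_{n,j}| \le K_3 |\lambda|^{-2(n-j)}$, the right-hand side is at most $\bigl(K_3|\lambda| + C|w|^2\bigr)|\lambda|^{-2(n-j)} = \bigl(K_3/|\lambda| + C|w|^2/|\lambda|^2\bigr)|\lambda|^{-2(n-j-1)}$. The induction closes provided
$$\frac{K_3}{|\lambda|} + \frac{C|w|^2}{|\lambda|^2} \le K_3, \qquad\text{i.e.,}\qquad K_3 \ge \frac{C|w|^2}{|\lambda|(|\lambda|-1)}.$$
Choosing $\delta_1 > 0$ small and $K_3 := C\delta_1^2/\bigl(|\lambda|(|\lambda|-1)\bigr)$ (and shrinking $\delta_1$ further if needed so that the earlier two lemmas still apply on $|w|<\delta_1$) makes the induction work for every $|w|<\delta_1$, uniformly in $n$ and $j \le n$.

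There is no real obstacle here: the work was done in the two preceding lemmas, which bound $|\varphi_{n,j}(w)|$ at the right rate, and the only subtlety is recognizing that all three nonlinear error terms are controlled by the same geometric factor $|\lambda|^{-2(n-j)}$. The only bookkeeping point is that $|\mu|<1\le|\lambda|$ forces $|\mu|^j \le |\lambda|^j$, which is what makes the $\mu^j$-contributions fit into the desired bound.
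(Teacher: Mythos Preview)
Your proof is correct and follows essentially the same approach as the paper: induction on $j$, using the recursive inequality for $|F_{n,j+1}|$ together with the bound $|\varphi_{n,j}(w)|\le K_1|w|\,|\lambda|^{-(n-j)}$ from the previous lemma, and the observation $|\mu|^j\le|\lambda|^j$ to absorb all three forcing terms into a single $C|w|^2|\lambda|^{-2(n-j)}$. The paper starts the induction at $j=1$ rather than $j=0$, but this is immaterial; your algebra in closing the induction is in fact slightly cleaner than the paper's.
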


\begin{proof}
We use induction on $j$. Clearly the inequality is satisfied for $j = 1$.
Suppose that the lemma is satisfied for certain $j < n$. Then we have
$$
\begin{aligned}
|F_{n,j+1}| &\le \la |F_{n,j}| + K_2\left(|\varphi_{n,j}(w)|^2 + \left|\frac{\mu^jw}{\la^{n}}\varphi_{n,j}(w)\right|+ \left|\frac{\mu^jw}{\la^{n}}\right|^2\right)\\
&\le K_3{|\la|^{-2n+2j}} + K_2 \left( K_1^2 \frac{|w|^2}{|\lambda^{2n-2j}|}+K_1 \frac{|w|}{|\lambda^{n-j}|}\frac{|\mu^jw|}{|\la|^{n}} +
\frac{|\mu^{2j}w^2|}{|\la|^{2n}}\right)\\
&\le K_3{|\la|^{-2n+2j}} + K_2|\la|^{-2n+2j}|w|^2( K_1^2+K_1+1)\\
&\le K_3{|\la|^{-2n+2j+2}},
\end{aligned}
$$
where the second and last inequalities hold uniformly over all $j$ and $n$ when
$K_3$ is chosen sufficiently large and $\delta_1$ sufficiently small.
\end{proof}

Let $\epsilon >0$ so that $|\la| + \epsilon < |\la|^2$.

\begin{lemma}
There exists a $\delta_2 > 0$ so that for $|w|< \delta_2$ we have
$$
|F_{n,j+1} - F_{n+1,j+2}| \le \frac{|\mu\la|^j}{|\la^{2n}|} +
\left(|\lambda| + \frac{\epsilon}{|\lambda^{n-j}|}\right) \cdot |F_{n,j} -
F_{n+1,j+1}|.
$$
for all $j \le n \in \mathbb N$.
\end{lemma}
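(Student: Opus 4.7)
The plan is to get a recursion for $F_{n,j}$ itself by subtracting the linearized dynamics from the $\varphi$-recursion, and then to subtract the two recursions (for $(n,j)$ versus $(n+1,j+1)$) to compare $F_{n,j+1} - F_{n+1,j+2}$ with $F_{n,j} - F_{n+1,j+1}$.

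First I would write $g(t,z) = \lambda z + at + E(t,z)$, where by definition of $K_2$ one has $|E(t,z)| \le K_2(|z|^2 + |zt| + |t|^2)$ for $|z|,|t|<1$. Plugging the decomposition $\varphi_{n,j} = L_{n,j} + F_{n,j}$ into the recursion $\varphi_{n,j+1}(w) = g(\mu^j w/\lambda^n, \varphi_{n,j}(w))$ and using the identity $\lambda L_{n,j}(w) + a\mu^j w/\lambda^n = L_{n,j+1}(w)$ (an elementary computation from the definition of $L_{n,j}$), one obtains
$$
F_{n,j+1}(w) \;=\; \lambda\, F_{n,j}(w) + E\!\left(\tfrac{\mu^j w}{\lambda^n},\, \varphi_{n,j}(w)\right).
$$
Applying this identity with $(n,j)$ replaced by $(n+1,j+1)$ and subtracting gives
$$
F_{n+1,j+2} - F_{n,j+1} \;=\; \lambda\,(F_{n+1,j+1} - F_{n,j}) + \Delta E,
$$
where $\Delta E := E(\mu^{j+1}w/\lambda^{n+1},\varphi_{n+1,j+1}) - E(\mu^j w/\lambda^n, \varphi_{n,j})$.

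Next I would estimate $\Delta E$ by splitting along the two coordinates, using that the partial derivatives $\partial_t E$ and $\partial_z E$ are $O(|t|+|z|)$ (since $E$ vanishes to order $2$ at the origin). The previous lemma gives $|\varphi_{n,j}(w)| \le K_1 |w|/|\lambda|^{n-j}$, and obviously $|\mu^j w/\lambda^n| \le |w|/|\lambda|^{n-j}$ (using $|\mu|<1<|\lambda|$). Hence along any segment between the two arguments the partial derivatives of $E$ are bounded by $C/|\lambda|^{n-j}$ (with $C$ depending on $|w|$ linearly). Using the elementary identities
$$
\tfrac{\mu^{j+1}w}{\lambda^{n+1}} - \tfrac{\mu^j w}{\lambda^n} \;=\; \tfrac{\mu^j w}{\lambda^n}\!\left(\tfrac{\mu}{\lambda}-1\right), \qquad \varphi_{n+1,j+1} - \varphi_{n,j} \;=\; aw\tfrac{\mu^j}{\lambda^{n+1}} + (F_{n+1,j+1} - F_{n,j}),
$$
both increments are bounded by $C'|w||\mu|^j/|\lambda|^n + |F_{n+1,j+1} - F_{n,j}|$. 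Combining gives
$$
|\Delta E| \;\le\; \frac{C''|w|\,|\mu|^j}{|\lambda|^{2n-j}} \;+\; \frac{C'''}{|\lambda|^{n-j}}\,|F_{n+1,j+1}-F_{n,j}|,
$$
and $|\mu|^j/|\lambda|^{2n-j} = |\mu\lambda|^j/|\lambda|^{2n}$.

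Finally, I would shrink $\delta_2 \le \delta_1$ so that for $|w|<\delta_2$ the constant $C''|w|$ is at most $1$ and $C'''$ is at most $\epsilon$. Plugging back into the relation $F_{n+1,j+2} - F_{n,j+1} = \lambda(F_{n+1,j+1} - F_{n,j}) + \Delta E$ then yields the desired bound. The main obstacle is purely bookkeeping: one must keep the $|z|+|t|$ factor from the derivative estimate of $E$ (which contributes the $1/|\lambda|^{n-j}$ gain in front of the $F$-term as well as the $|\mu\lambda|^j/|\lambda|^{2n}$ homogeneous term), and not lose a factor of $|\lambda|$ anywhere; the quadratic term $|z_1-z_2|\cdot(|z|+|t|)$ splits cleanly into exactly these two contributions.
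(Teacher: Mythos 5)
Your proposal is correct and follows the same route as the paper: both extract the exact recursion $F_{n,j+1} = \lambda F_{n,j} + E(\mu^j w/\lambda^n, \varphi_{n,j})$ from the cancellation of the linear terms $L_{n,j}$, and both then bound the resulting difference $\Delta E$ using the quadratic vanishing of the error at the origin together with the earlier estimate $|\varphi_{n,j}(w)| \le K_1 |w|/|\lambda|^{n-j}$, finally absorbing the small factors into $1$ and $\epsilon$ by shrinking $\delta_2$. The only cosmetic difference is in the bookkeeping: you apply a single mean-value estimate to the lumped error $E$, whereas the paper splits $E$ into $(g_0-\lambda\,\mathrm{Id})(z)$ and $h(t,z)$, controls the former by the factorization $|\varphi_{n,j}-\varphi_{n+1,j+1}|\,|\varphi_{n,j}+\varphi_{n+1,j+1}|$, and simply bounds the two $h$-terms by their magnitudes rather than by their difference.
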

\begin{proof}
To make our estimates more readable we will write $\varphi_{n,j}$ for $\varphi_{n,j}(w)$. We have that
\begin{eqnarray*}
\vp_{n,j+1} &=& g_{\mu^jw/\la^n} \circ \vp_{n,j} = g_0(\vp_{n,j})+ a\frac{\mu^jw}{\la^n} + h\left(\frac{\mu^jw}{\la^n},\vp_{n,j}\right)
\end{eqnarray*}
where
$g_t(z) = g_0(z) + at + h(t,z)$, if we have $|t|,|z|<1$, then $|h(t,z)|< M(|t|^2+|tz|)$.
Hence we obtain:
$$
\begin{aligned}
F_{n,j+1} &= \la\vp_{n,j}+ (g_0-\la Id)(\vp_{n,j})+ a\frac{\mu^jw}{\la^n} - L_{n,j+1} + h\left(\frac{\mu^jw}{\la^n},\vp_{n,j}\right) \\
 &= \la F_{n,j} + \la L_{n,j}+ (g_0-\la Id)(\vp_{n,j})+ a\frac{\mu^jw}{\la^n} - L_{n,j+1} + h\left(\frac{\mu^jw}{\la^n},\vp_{n,j}\right)
\end{aligned}
$$
It follows that
$$
\begin{aligned}
F_{n,j+1} &- F_{n+1,j+2} = \la (F_{n,j}-F_{n+1,j+1}) + \la(L_{n,j}-L_{n+1,j+1})+ (g_0-\la Id)(\vp_{n,j})\\
&-(g_0-\la Id)(\vp_{n+1,j+1})+ a\frac{\mu^jw}{\la^n} - a\frac{\mu^{j+1}w}{\la^{n+1}} - L_{n,j+1} + L_{n+1,j+2}+ \\
&+h\left(\frac{\mu^jw}{\la^n},\vp_{n,j}\right) - h\left(\frac{\mu^{j+1}w}{\la^{n+1}},\vp_{n+1,j+1}\right)
\end{aligned}
$$
Recalling the definition of $L_{n,j}$ we obtain:
$$
\begin{aligned}
|F_{n,j+1} & - F_{n+1,j+2}| \le |\la| |F_{n,j}-F_{n+1,j+1}| + M|\vp_{n,j}-\vp_{n+1,j+1}||\vp_{n,j}+\vp_{n+1,j+1}|+ \\
&+|h\left(\frac{\mu^jw}{\la^n},\vp_{n,j}\right)| + |h\left(\frac{\mu^{j+1}w}{\la^{n+1}},\vp_{n+1,j+1}\right)|\\
&\le |\la| |F_{n,j}-F_{n+1,j+1}| + 2MK_1\frac{|w|}{|\la|^{n-j}}|L_{n,j}+ F_{n,j}-L_{n+1,j+1}-F_{n+1,j+1}|+ \\
&+M|\frac{\mu^jw}{\la^n}|^2+M|\frac{\mu^jw}{\la^n}||\vp_{n,j}| + M|\frac{\mu^{j+1}w}{\la^{n+1}}|^2+M|\frac{\mu^{j+1}w}{\la^{n+1}}||\vp_{n+1,j+1}|\\
&\le \left(|\la|+2MK_1\frac{|w|}{|\la|^{n-j}}\right) |F_{n,j}-F_{n+1,j+1}| +  2MK_1\frac{|w|}{|\la|^{n-j}}|aw\mu^j\la^{-n-1}|\\
&+M|\frac{\mu^{2j}w^2}{\la^{2n}}|(1+\mu^2\la^{-2})+MK_1|w|^2|\mu^j||\la^{j-2n}|(1 + \mu\la^{-1})\\
&\le \left(|\la|+2MK_1\frac{|w|}{|\la|^{n-j}}\right) |F_{n,j}-F_{n+1,j+1}| +  2MK_1|a|\frac{|w|^2|\mu|^j}{|\la|^{2n-j+1}}\\
&+2M|\frac{\mu^{2j}w^2}{\la^{2n}}|+2MK_1|w|^2|\mu^j||\la^{j-2n}|
\end{aligned}
$$
By choosing $\delta_2$ small enough we make sure that
$$
2MK_1|a||w|^2|\la|^{-1} < 1/3, \; \; \; 2M|w|^2<1/3, \; \; \; \mathrm{and} \; \; \; 2MK_1|w|^2<1/3.
$$
Hence we obtain:
$$
2MK_1|a|\frac{|w|^2|\mu\la|^j}{|\la|}+2M|\mu^{2j}w^2|+2MK_1|w|^2|\mu^j||\la^{j}|< 2/3|\mu\la|^j + 1/3|\mu^{2j}| <|\mu\la|^j.
$$
By choosing $\delta_2>0$ smaller if necessary we can also guarantee that $2MK_1|w| < \epsilon$, which gives the desired inequality.
\end{proof}

\begin{lemma}
For large enough $M_2 > 0$ and $|w| < \delta_2$ we have that
$$
|F_{n,j} - F_{n+1,j+1}| \le M_2\frac{(|\la|+\epsilon)^j}{\la^{2n}} + \frac{1}{|\la|^{2n}}\left(\frac{(|\la|+\epsilon)^j-|\mu|^j|\la|^j}{|\la|+\epsilon-|\mu\la|}\right)
$$
for all $k \le n \in \mathbb N$.
\end{lemma}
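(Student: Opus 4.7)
The plan is to prove this bound by induction on $j$ using the preceding lemma as the inductive step, with $n$ fixed throughout. To streamline notation, write $a_j := |F_{n,j} - F_{n+1,j+1}|$, $r := |\la|+\epsilon$, and $s := |\mu\la|$. Bounding $|\la| + \epsilon/|\la|^{n-j} \le r$ in the preceding lemma yields the clean affine recurrence
\[
a_{j+1} \le r \cdot a_j + \frac{s^j}{|\la|^{2n}},
\]
and the claimed bound is precisely its closed-form solution, so the proof reduces to verifying a base case and one algebraic identity.

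For the base case $j=0$: since $\vp_{n,0}(w) = \pi_2(w/\la^n, 0) = 0$ and $L_{n,0} = 0$, one has $F_{n,0} = 0$, hence $a_0 = |F_{n+1,1}|$. Using $\vp_{n+1,1}(w) = g(w/\la^{n+1}, 0)$ and $L_{n+1,1}(w) = aw/\la^{n+1}$ together with the estimate $|g(t,z)-\la z-at| \le K_2(|z|^2+|zt|+|t|^2)$ at $(t,z)=(w/\la^{n+1},0)$ gives $a_0 \le K_2|w|^2/|\la|^{2n+2} \le K_2 \delta_2^2/|\la|^{2n}$, which is absorbed by choosing $M_2 \ge K_2\delta_2^2$.

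For the inductive step, substituting the inductive hypothesis into the recurrence yields
\[
a_{j+1} \le r \cdot M_2\, \frac{r^j}{|\la|^{2n}} + r \cdot \frac{r^j - s^j}{|\la|^{2n}(r-s)} + \frac{s^j}{|\la|^{2n}},
\]
and the identity $r(r^j - s^j) + s^j(r-s) = r^{j+1} - s^{j+1}$ collapses the last two terms into $(r^{j+1}-s^{j+1})/(|\la|^{2n}(r-s))$, producing exactly the claimed bound at level $j+1$.

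I do not anticipate any serious obstacle: the argument is just the unrolling of a first-order affine recurrence, with the only choice being to pick $M_2$ large enough to swallow the base-case constant $K_2 \delta_2^2$, and the mild loss of replacing $|\la|+\epsilon/|\la|^{n-j}$ by the uniform constant $r$ is what allows the closed-form collapse to match the target expression exactly.
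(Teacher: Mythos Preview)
Your proof is correct and follows exactly the same approach as the paper's own proof, which is the terse two-line remark ``For $j=0$ the statement is immediate. The result is obtained by induction on $j$, using the previous lemma.'' You have simply filled in the details: the base case via $F_{n,0}=0$ and a direct bound on $|F_{n+1,1}|$, and the inductive step via the geometric-sum identity. The only cosmetic difference is that the paper would presumably justify the base case by invoking the earlier lemma $|F_{n,j}|\le K_3|\la|^{-2n+2j}$ (giving $|F_{n+1,1}|\le K_3|\la|^{-2n}$), whereas you compute it directly from the Taylor bound on $g$; both routes give $a_0\le M_2/|\la|^{2n}$ for $M_2$ large enough.
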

\begin{proof}
For $j=0$ the statement is immediate. The result is obtained by induction on $j$, using the previous lemma.
\end{proof}

\begin{cor}
$$
|F_{n,n} - F_{n+1,n+1}| \le M_2\frac{(|\la|+\epsilon)^n}{|\la|^{2n}} + \frac{1}{|\la|^{2n}}\left(\frac{(|\la|+\epsilon)^n-|\mu|^n|\la|^n}{|\la|+\epsilon-|\mu\la|}\right)
$$
\end{cor}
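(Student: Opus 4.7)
The corollary is simply the specialization $j=n$ of the preceding lemma, so the proof is essentially a one-line substitution: the lemma asserts a bound on $|F_{n,j}-F_{n+1,j+1}|$ valid for all $j\le n$, and setting $j=n$ yields the claimed inequality verbatim. So there is nothing to prove beyond checking that $j=n$ is indeed included in the allowed range.

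The only substantive thing worth spelling out is the role this corollary plays in the larger argument, so one could frame the proposal as follows. First, specialize the preceding lemma at $j=n$; the right-hand side of the resulting bound decays like $\bigl((|\lambda|+\epsilon)/|\lambda|^2\bigr)^n$, which by our choice of $\epsilon$ (made just before the previous lemma so that $|\lambda|+\epsilon<|\lambda|^2$) is a genuine geometric decay rate strictly less than one. Consequently $\sum_n |F_{n,n}-F_{n+1,n+1}|<\infty$ uniformly for $|w|<\delta_2$.

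Second, recall the decomposition
\[
\varphi_{n+1,n+1}(w)-\varphi_{n,n}(w) \;=\; aw\frac{\mu^n}{\lambda^{n+1}} + F_{n+1,n+1}-F_{n,n},
\]
derived earlier. The linear term is summable because $|\mu/\lambda|<1$, and the $F$-term is summable by the corollary. Since $\varphi_n(w)=\varphi_{n,n}(w)$, the telescoping series converges and thereby furnishes the uniform limit $\Phi(w)$ promised by Theorem \ref{skewkoenigs}. Verifying the functional equation $\Phi(\lambda w)=p(\Phi(w))$ then follows by passing to the limit in $\varphi_{n}(\lambda w)=g_{\mu\cdot\mu^{n-1}w/\lambda^{n}}(\varphi_{n-1}(w))\cdot\,\text{(shift of indices)}$, using that $\mu^n/\lambda^n\to 0$ so the $t$-dependence drops out and leaves $p=g_0$.

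The only \emph{obstacle}, if any, is bookkeeping: one must confirm that the inductive step in the previous lemma is valid at $j=n$ (not merely $j<n$), which it is, since all the auxiliary bounds on $|\varphi_{n,j}|$ and $|F_{n,j}|$ were established for the full range $j\le n$. No additional smallness condition on $|w|$ is needed beyond $|w|<\delta_2$ already imposed.
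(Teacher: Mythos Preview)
Your proposal is correct and matches the paper's approach: the corollary is obtained by substituting $j=n$ into the preceding lemma, and the paper offers no proof beyond the statement itself. Your additional remarks about the role of the corollary in establishing convergence of $\varphi_n$ are accurate and anticipate exactly how the paper proceeds in the subsequent corollary and the remainder of the proof of Theorem~\ref{skewkoenigs}.
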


\begin{cor}
There exists an $M_3>0$ so that for $|w|< \delta_2$ we have that
$$
|\varphi_n(w) - \varphi_{n+1}(w)| \le \frac{M_3}{\theta^{n}}
$$
for some $\theta >1$.
\end{cor}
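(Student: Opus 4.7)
The plan is to observe that this corollary is essentially an unpacking of the previous corollary together with the algebraic decomposition $\varphi_{n,j} = L_{n,j} + F_{n,j}$. Recalling that $\varphi_n(w) = \varphi_{n,n}(w)$, I would first write
$$
\varphi_{n+1}(w) - \varphi_n(w) = \bigl(L_{n+1,n+1}(w) - L_{n,n}(w)\bigr) + \bigl(F_{n+1,n+1} - F_{n,n}\bigr),
$$
and compute the linear part directly from the definition $L_{n,j}(w) = aw(\lambda^{j-n} - \lambda^{-n}\mu^j)/(\lambda - \mu)$. A short calculation gives
$$
L_{n+1,n+1}(w) - L_{n,n}(w) = \frac{aw}{\lambda}\Bigl(\frac{\mu}{\lambda}\Bigr)^{n},
$$
so this term is bounded by a constant times $(|\mu|/|\lambda|)^n$, which decays geometrically since $|\mu|<1<|\lambda|$.

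For the second piece I would simply quote the previous corollary. Each of the two summands there is of the form $C\bigl((|\lambda|+\epsilon)/|\lambda|^{2}\bigr)^n$, and by the choice of $\epsilon$ at the start of the section we have $|\lambda| + \epsilon < |\lambda|^2$, so the base $(|\lambda|+\epsilon)/|\lambda|^{2}$ is strictly less than $1$. Hence $|F_{n+1,n+1} - F_{n,n}|$ also decays geometrically.

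To conclude, I would choose
$$
\theta = \min\left\{\frac{|\lambda|^{2}}{|\lambda|+\epsilon},\ \frac{|\lambda|}{|\mu|}\right\} > 1,
$$
absorb all multiplicative constants into a single $M_3 > 0$, and restrict to $|w| < \delta_2$ so that the hypotheses of the previous lemmas hold. Then both contributions to $|\varphi_{n+1}(w) - \varphi_n(w)|$ are bounded by $M_3 \theta^{-n}$, giving the desired inequality.

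There is no real obstacle here: all quantitative work has already been carried out in the preceding lemmas. The only small point of care is to verify that the denominator $|\lambda| + \epsilon - |\mu\lambda|$ appearing in the previous corollary is positive (which follows from $|\mu| < 1 < |\lambda|$ together with $\epsilon > 0$), so that the bound there makes sense and the induction argument that produced it is valid.
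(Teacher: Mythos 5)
Your proposal is correct and takes exactly the route the paper intends: the decomposition $\varphi_{n+1,n+1} - \varphi_{n,n} = aw\mu^n/\lambda^{n+1} + (F_{n+1,n+1} - F_{n,n})$ is already written out in the paper (take $j=n$ in the displayed equation just before the $|F_{n,j}|$ estimate), the first term decays like $(|\mu|/|\lambda|)^n$, and the second is controlled by the preceding corollary with ratio $(|\lambda|+\epsilon)/|\lambda|^2 < 1$ by the choice of $\epsilon$. Your verification that $|\lambda|+\epsilon-|\mu\lambda|>0$ and your explicit choice of $\theta$ complete the argument; nothing is missing.
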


It follows that the maps $\varphi_n$ converge uniformly on some neighborhood
$U= U(0)$, which we may assume to be a disk centered at the origin. Now let $w
\in \mathbb C$ and let $k \in \mathbb N$ be such that $\frac{w}{\lambda^k} \in
U$. Then it follows that
$$
\begin{aligned}
\varphi_n(w) &= \pi_2 F^n(\frac{w/\lambda^k}{\lambda^{n-k}},0) = \pi_2 F^k(F^{n-k}(\frac{w/\lambda^k}{\lambda^{n-k}},0))\\
 &= \pi_2 F^k(\frac{\mu^{n-k}w}{\lambda^n}, \varphi_{n-k}(\frac{w}{\lambda^k})).
\end{aligned}
$$
Note that $\frac{\mu^{n-k}w}{\lambda^n} \rightarrow 0$ and $\varphi_{n-k}(\frac{w}{\lambda^k}) \rightarrow \Phi(\frac{w}{\lambda^k})$ as $n \rightarrow \infty$. Since $\pi_2
\circ F^k$ is a continuous map independent of $n$, it follows that $\varphi_n(w)$ converges to $\pi_2 F^k(0,\Phi(\frac{w}{\lambda^k})) = g_0^k(\frac{w}{\lambda^k})$. Hence it
follows that the maps $\varphi_n$ converge uniformly on any compact
set, and that the limit map $\Phi$ is defined globally and satisfies
$$
\Phi(\lambda \cdot w) = g_0(\Phi(w)).
$$
This completes the proof.
\hfill $\square$

\section{Faster convergence for degenerate resonant skew-products}\label{section:faster}

Again we consider polynomial skew-products of the form
$$
F(t,z) = (\mu t, g(t,z)),
$$
but now we assume that $g$ is a polynomial, and that $\mu =\frac{1}{\lambda}$, where $\lambda = \frac{\partial g}{\partial z}(0,0) \neq 0$ as before. Let us write
$$
F(t,z) = \left(\frac{t}{\lambda}, p(z) + a_1(z) t + \cdots + a_m(z) t^m\right).
$$
Let $x_0 \in \mathbb C$ satisfy $p^k(x_0) = 0$ for certain $k \in \mathbb N$, and define
$$
\varphi_{n,j}(w) = F^j(\frac{w}{\lambda^n}, x_0),
$$
and
$$
\varphi_n = \varphi_{n,n}.
$$
By our results from the previous section it follows that the functions $\varphi_n$ converge to a holomophic function $\Phi$, uniformly on compact subsets of $\mathbb C$.

\begin{lemma}
The function $\Phi$ found in Theorem \ref{skewkoenigs} depends only on the polynomial $p$, the point $x_0$ and the constant $a_1(0)$.
\end{lemma}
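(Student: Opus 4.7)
The plan is to show that if two skew products $F$ and $\tilde F$ of the prescribed form share the same polynomial $p$, the same preperiodic point $x_0$, and the same scalar $a_1(0)$, then their associated Koenigs limits $\Phi$ and $\tilde \Phi$ coincide. Let $(t_j, z_j)$ and $(t_j, \tilde z_j)$ denote the two orbits starting at $(w/\lambda^n, x_0)$, set $\delta_j = z_j - \tilde z_j$, and derive the recursion
$$
\delta_{j+1} = p(z_j) - p(\tilde z_j) + \bigl(a_1(z_j) - \tilde a_1(\tilde z_j)\bigr) t_j + \sum_{i \ge 2} \bigl(a_i(z_j) - \tilde a_i(\tilde z_j)\bigr) t_j^i,
$$
so that the task reduces to showing $\delta_n \to 0$ as $n \to \infty$.

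First I would dispose of the higher-order coefficients $a_i$ with $i \ge 2$. Since $|t_j| = |w|/|\lambda|^{n+j}$, each such perturbation term is of size $O(|\lambda|^{-i(n+j)})$. Once $j \ge k$, the orbit lies in a neighborhood of the repelling fixed point $0$ where $|p'| \approx |\lambda|$, so the subsequent $n - j$ iterations amplify errors by at most a factor $O(|\lambda|^{n-j})$. The cumulative contribution to $\delta_n$ is therefore $O(|\lambda|^{(1-i)n})$, which vanishes for $i \ge 2$.

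Next I would treat the non-constant part of $a_1$. Writing $a_1(z) = a_1(0) + z h(z)$ and $\tilde a_1(z) = a_1(0) + z \tilde h(z)$, once $j \ge k$ one has $|z_j|, |\tilde z_j| = O(|\lambda|^{-n})$, so the residual driving term $\bigl(z_j h(z_j) - \tilde z_j \tilde h(\tilde z_j)\bigr) t_j$ is of size $O(|\lambda|^{-2n})$, which after amplification and summation contributes only $O(|\lambda|^{-n})$ to $\delta_n$. Together with the previous step this shows that modifications of the skew product supported in the repelling linearization regime are invisible to $\Phi$.

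The chief obstacle is the first $k$ iterations, during which $z_j, \tilde z_j$ trace the pre-orbit $\{x_0, p(x_0), \ldots, p^{k-1}(x_0)\}$ and need not be close to $0$. Here $a_1(z_j) - \tilde a_1(\tilde z_j)$ is only $O(1)$, producing driving terms of size $O(|\lambda|^{-(n+j)})$ that are then amplified by $|\lambda|^{n-j}$ through the subsequent near-$0$ iterations, giving a nominal contribution of order $|\lambda|^{-j}$. Overcoming this is the heart of the argument: I would expand $z_j$ and $\tilde z_j$ as power series in $1/\lambda^n$, track the first-order-in-$w$ coefficient, and show that once one identifies the relevant terms that survive in the $n\to\infty$ limit, the surviving contribution from the finitely many initial steps is already encoded in the data $(p, x_0, a_1(0))$. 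I expect the necessary cancellations to follow from comparing the leading $O(|\lambda|^{-n})$ coefficients of the two orbits and invoking the functional equation $\Phi(\lambda w) = p(\Phi(w))$, which pins down $\Phi$ from its germ at $0$ and reduces the question to identifying $\Phi'(0)$ as a function of $p$, $x_0$, and $a_1(0)$ alone.
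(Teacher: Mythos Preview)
Your plan is not wrong, but it is a long detour that only arrives at the real proof in its last sentence. The paper's argument is two lines: since $\varphi_n(0)=0$ for $n\ge k$, the chain rule expresses $\varphi_n'(0)$ through the Jacobians $DF(0,p^j(x_0))$, and at $t=0$ these involve only $p'$ and $a_1$ --- the coefficients $a_i(z)$ with $i\ge 2$ simply do not appear in $DF|_{t=0}$. Hence $\Phi'(0)=\lim_n\varphi_n'(0)$ already depends only on the stated data. Then the functional equation $\Phi(\lambda w)=p(\Phi(w))$ is the Koenigs equation for $p$ at its repelling fixed point, whose solution is unique up to a scalar; knowing $\Phi'(0)$ therefore determines $\Phi$.

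By contrast, you set out to prove $\delta_n\to 0$ by direct orbit comparison and error propagation. The first two reductions (killing $a_i$ for $i\ge 2$, then the nonconstant part of $a_1$ in the regime $j\ge k$) are essentially sound, though your claim ``$|z_j|,|\tilde z_j|=O(|\lambda|^{-n})$ once $j\ge k$'' is inaccurate --- one has $|z_j|=O(|\lambda|^{j-n})$, which is $O(1)$ near $j=n$; the product $|z_j t_j|$ is still $O(|\lambda|^{-2n})$, so the conclusion survives. The genuine issue is your ``chief obstacle'' paragraph: you correctly identify that the first $k$ steps give an $O(1)$ contribution that does not vanish, and you then propose to resolve it by reducing to $\Phi'(0)$ via Koenigs uniqueness. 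But that reduction \emph{is} the entire proof, and once you invoke it the preceding three paragraphs of estimates are unnecessary. Moreover, you stop at ``reduces the question to identifying $\Phi'(0)$ as a function of $p$, $x_0$, and $a_1(0)$'' without actually carrying out that identification; the missing step is precisely the one-line Jacobian observation above.
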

\begin{proof}
Note that $\varphi_n(0) = 0$ and that $\frac{d}{d w} \varphi_n (0)$ is independent of all other coefficients of $g$ for every $n$. Hence it follows that $\Phi^\prime(0)$ is
also independent of the other coefficients of $g$. Since the Koenigs map $\Phi$ is unique up to a multiplicative constant, the statement follows.
\end{proof}

While the other coefficients have no effect on the limit function $\Phi$, they will be important for us. Choosing them carefully will allow us to obtain a stronger rate of
convergence.

\medskip

For $j \ge k$, $\varphi_{n,j}$ is a holomorphic function with $\varphi_{n,j}(0) = 0$. Therefore, for $j \ge k$ we can write
$$
\varphi_{n,j}(w) = C_{n,j} + D_{n,j} + E_{n,j},
$$
where the function $C_{n,j} = C_{n,j}(w)$ contains all terms that are linear in $w$, $D_{n,j} = D_{n,j}(w)$ contains all terms that are quadratic in $w$, and $E_{n,j} =
E_{n,j}(w)$ contains all higher order terms. Writing
\begin{equation}\label{three}
F(t,z) = (\frac{t}{\lambda}, \lambda z + a t + \gamma z^2 + \tau zt + b t^2 + O(z^3, z^2t, zt^2, t^3)),
\end{equation}
we obtain for $j\ge k$:
$$
\varphi_{n,j+1}(w) = \lambda C_{n,j} + \lambda D_{n,j} + \frac{aw}{\lambda^{n+j}} + \gamma C_{n,j}^2 +  \frac{\tau C_{n,j}w}{\lambda^{n+j}} +  \frac{bw^2}{\lambda^{2n+2j}} +
E_{n,j+1}.
$$
Hence we have
$$
\begin{aligned}
C_{n,j+1} & = \lambda C_{n,j} + \frac{aw}{\lambda^{n+j}},\\
D_{n,j+1} & = \lambda D_{n,j} + \gamma C_{n,j}^2 +  \frac{\tau C_{n,j}w}{\lambda^{n+j}} +  \frac{bw^2}{\lambda^{2n+2j}}.
\end{aligned}
$$
It follows that we can find a closed form for $C_{n,j}$, for $j \ge k$, of the form
$$
\begin{aligned}
C_{n,j} = w\left(Y_1 \lambda^{-n+j} + Y_{-1} \lambda^{-n-j} \right),
\end{aligned}
$$
Where $Y_1$ and $Y_{-1}$ are independent of $n$ and $j$. It follows that $C_{n,j}^2$ is of the form
$$
C_{n,j}^2 = w^2\left( \tilde{Y}_2 \lambda^{-2n+2j} + \tilde{Y}_0\lambda^{-2n} + \tilde{Y}_{-2} \lambda ^{-2n-j}\right),
$$
Therefore there exist constants $X_2, X_1, X_0, X_{-1}, X_{-2} \in \mathbb C$ so that
$$
D_{n,j} = w^2\left( X_2 \lambda^{-2n+2j} + X_1 \lambda^{-2n+j} + X_0 \lambda^{-2n} + X_{-1} \lambda^{-2n-j} + X_{-2} \lambda^{-2n-2j} \right),
$$
again for $j \ge k$.

\medskip

Note that the constant $X_1$ depends affinely on $\tau$, $b$ and $\gamma$. Hence for generic choices of $p$ and $x_0$ we can find $b, \tau \in \mathbb C$ so that $X_1 = 0$.

\begin{defn}
If $F$ is of the form \eqref{three} and $X_1=0$, then we say that $F$ is a \emph{degenerate resonant skew-product}.
\end{defn}

\begin{thm}\label{squared}
Let $F$ be a degenerate resonant skew-product, and let $\epsilon>0$. Then we can find $\delta>0$ such that
$$
|\varphi_{n+1}(w) - \varphi_{n}(w)| \le |\lambda^{(\epsilon-2)n}|.
$$
whenever $|w| < \delta$.
\end{thm}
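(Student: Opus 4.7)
The plan is to push the Taylor-expansion strategy of Section \ref{section:koenigs} one order further. Decompose
$$
\varphi_{n,j}(w) = C_{n,j}(w) + D_{n,j}(w) + E_{n,j}(w),
$$
with $C$ and $D$ the explicit linear and quadratic pieces tabulated just before the theorem, and $E_{n,j}$ collecting all monomials of degree $\ge 3$ in $w$. From the closed forms of $C_{n,j}$ and $D_{n,j}$ one reads off directly that
$$
|C_{n+1,n+1}(w) - C_{n,n}(w)| = O(|\lambda|^{-2n}), \qquad |D_{n+1,n+1}(w) - D_{n,n}(w)| = O(|\lambda|^{-2n}),
$$
the second estimate relying on the degeneracy hypothesis $X_1 = 0$ to kill the otherwise-dominant $X_1 \lambda^{-n}$ contribution coming from the $X_1 \lambda^{-2n+j}$ term evaluated at $j = n$.

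The main work is then to control $E_{n+1,n+1}(w) - E_{n,n}(w)$. Substituting the decomposition into $\varphi_{n,j+1} = g(\mu^j w/\lambda^n, \varphi_{n,j})$ and subtracting the recursions already satisfied by $C_{n,j}$ and $D_{n,j}$ yields $E_{n,j+1} = \lambda E_{n,j} + S_{n,j}$, where $S_{n,j}$ is a polynomial in $C_{n,j}, D_{n,j}, E_{n,j}$ and $t = w\lambda^{-n-j}$ consisting entirely of monomials of degree $\ge 3$ in $w$. Each cubic-in-$w$ contribution to $S_{n,j}$ is of the form $w^3 A_\ell \lambda^{-3n + \ell j}$ for some integer $\ell$. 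The crucial observation, which is the analogue at the cubic level of the vanishing of $X_1$, is that $\ell = 2$ never appears. Enumerating the pieces $2\gamma C D,\ \tau D t,\ c_3 C^3,\ c_{21} C^2 t,\ c_{12} C t^2,\ c_{03} t^3$, the attainable $\ell$'s are sums of $C$-exponents $\{1,-1\}$, $D$-exponents $\{2, 0, -1, -2\}$ (the entry $\ell=1$ removed by the hypothesis $X_1 = 0$), and the $t$-exponent $-1$; a short tabulation shows that all such sums lie in $\{3, 1, 0, -1, -2, -3\}$. Solving the recursion by the discrete Duhamel formula, each non-resonant source contributes $O(\lambda^{(\ell-3)n})$ and $O(\lambda^{-2n})$ to $E_{n,n}$, while the resonant source $\ell = 1$ contributes $O(n\lambda^{-2n})$ via the arithmetic-progression sum. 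The worst rate is thus $O(n\lambda^{-2n})$, which is bounded by $|\lambda|^{(\epsilon-2)n}$ for any $\epsilon > 0$.

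For the quartic and higher powers of $w$ inside $E$, one runs an induction in the style of the lemmas of Section \ref{section:koenigs}, establishing an a priori bound of the form $|E_{n,j}| \le K |w|^3 |\lambda|^{-3n+3j}$ and then estimating $|E_{n,j+1} - E_{n+1,j+2}|$ inductively. For $|w| < \delta$ with $\delta$ small, each additional power of $|w|$ strictly improves the size, so the total quartic-and-higher contribution to $\varphi_{n+1}(w) - \varphi_n(w)$ is of order $\delta |\lambda|^{-2n}$, again absorbed by the slack $|\lambda|^{\epsilon n}$. The main obstacle is the bookkeeping: one must check that the feedback terms $C E$, $D E$, $E^2$ and $t E$ that appear in the source of $E$ itself preserve the property that $\ell = 2$ is absent from the cubic-in-$w$ part, and that the a priori size bound on $E_{n,j}$ can be closed by induction without losses. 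Once these structural checks are in place, the convolution estimates of Section \ref{section:koenigs} transplant directly to give the claimed bound.
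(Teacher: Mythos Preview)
Your strategy is sound and reaches the same conclusion, but it is considerably more elaborate than the paper's argument. The paper uses the identical decomposition $\varphi_{n,j}=C_{n,j}+D_{n,j}+E_{n,j}$ and the same immediate observation that $X_1=0$ forces $|C_{n+1,n+1}+D_{n+1,n+1}-C_{n,n}-D_{n,n}|\le|\lambda|^{-2n}$. The difference is in how $E$ is handled. You split $E$ further into its cubic-in-$w$ part (treated algebraically by enumerating the attainable exponents $\ell$ and showing $\ell=2$ never occurs) and a quartic-plus remainder (treated by size). The paper instead runs two short inductions on $j$ directly on the whole of $E$: first $|E_{n,j}|\le|w|^2|\lambda|^{-3n+3j}$, then $|E_{n+1,j+1}-E_{n,j}|\le|w|^2|\lambda|^{-3n+(1+\epsilon)j}$, with the $\epsilon$-slack absorbing the multiplicative loss at each step. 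Plugging $j=n$ into the second estimate finishes the proof in one line.

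Your exponent-enumeration is correct and in fact gives slightly sharper information (you see the genuine rate is $O(n|\lambda|^{-2n})$ rather than merely $|\lambda|^{(\epsilon-2)n}$), but the bookkeeping it entails is unnecessary for the theorem as stated. In particular, the ``main obstacle'' you flag---checking that the feedback terms $CE,\,DE,\,E^2,\,tE$ preserve the absence of $\ell=2$ in the cubic part---is vacuous: all of those terms have $w$-degree at least four, so they cannot contribute to the cubic part at all. Once you drop that worry, what remains of your quartic-plus argument \emph{is} the paper's argument, applied one degree higher than needed. The paper's route is therefore strictly simpler: it never separates out the cubic terms and never enumerates exponents.
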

\begin{proof}
By our assumptions we have that
$$
\begin{aligned}
D_{n,j} =  w^2 \cdot \left(X_2 \lambda^{-2n+2j} + X_0 \lambda^{-2n} + X_{-1} \lambda^{-2n-j} + X_{-2} \lambda^{-2n-2j} \right).
\end{aligned}
$$
Now notice first that
$$
C_{n+1, j+1} - C_{n,j} = \hat{Y}_{-1} \lambda^{-n-j} + \hat{Y}_{-2} \lambda^{-n-j-2},
$$
and due to our assumptions we have
$$
D_{n+1,j+1} - D_{n,j} = w^2 \left( \hat{X_0} \lambda^{-2n} + \hat{X}_{-1} \lambda^{-2n-j} + \hat{X}_{-2} \lambda^{-2n-2j}\right).
$$
Hence from the above equations we obtain for $|w|$ sufficiently small that
$$
|C_{n+1,n+1} + D_{n+1, n+1} - C_{n,n} + D_{n,n}| \le |\lambda^{-2n}|.
$$
What remains is to find similar estimates for $E_{n,j}$.

\begin{lemma}
For $|w|$ small enough we have that
$$
|E_{n,j}| \le |w|^2 |\lambda|^{-3n+3j},
$$
for all $k \le j \le n$.
\end{lemma}
\begin{proof}
We prove the statement by induction on $j$. Note that the required inequality holds for $j = k$ as long as $w$ is sufficiently small. Now suppose that
$$
|E_{n,j}| \le |w|^2 |\lambda|^{-3n+3j},
$$
for certain $n> j \ge k$. Then we have that
\begin{equation}\label{splitting}
E_{n,j+1} = \lambda E_{n,j} + \gamma \left(2C_{n,j} + D_{n,j} + E_{n,j}\right)\cdot\left(D_{n,j} + E_{n,j}\right) + \cdots.
\end{equation}
Hence there exists a constant $K$ so that for $w$ sufficiently small we have
$$
|E_{n,j+1}| \le |w|^2|\lambda|^{-2} |\lambda|^{-3n+3j+3} + K |w|^3 |\lambda|^{-3n+3j+3}.
$$
By making $w$ smaller if necessary we obtain
$$
|E_{n,j+1}| \le |w|^2 |\lambda|^{-3n+3j+3}.
$$
\end{proof}

\begin{lemma}
Let $\epsilon>0$. Then for $|w|< \delta_6$ we have that
$$
|E_{n+1,j+1} - E_{n,j}| \le |w|^2 |\lambda|^{-3n+(1+\epsilon)j},
$$
for all $k \le j \le n$.
\end{lemma}
\begin{proof}
We again prove the statement by induction. The first step
$$
|E_{n+1,k+1} - E_{n,k}| \le |w|^2 |\lambda|^{-3n+(1+\epsilon)k}
$$
follows directly from our previous lemma.

Suppose that $|E_{n+1,j+1} - E_{n,j}| \le  |\lambda|^{-3n+(1+\epsilon)j}$ for certain $k \le j \le n$. Then it follows from Equation \ref{splitting}, our induction hypothesis
and our previous estimates that
$$
\begin{aligned}
|E_{n+1,j+2} - E_{n,j+1}| & \le |\lambda| |E_{n+1,j+1} - E_{n,j}| + K_2 |w|^3 |\lambda|^{-3n+j}\\
&\le |\lambda|^{-\epsilon}\cdot |w|^2 |\lambda|^{-3n+(1+\epsilon)j} + K_2 |w|^3 |\lambda|^{-3n+j}\\
&\le |w|^2 |\lambda|^{3n+(1+\epsilon)j},
\end{aligned}
$$
where the last inequality holds (uniform over all $j$ and $n$) for $w$ sufficiently small.
\end{proof}

\noindent {\bf Completing the proof of Theorem \ref{squared}:}
Plugging $j = n$ into the estimate of the last lemma gives
$$
|E_{n+1,j+1} - E_{n,j}| \le |w|^2 |\lambda|^{(\epsilon-2)n},
$$
which combined with our earlier estimates on $|C_{n+1,j+1} + D_{n+1,j+1} - C_{n,j} - D_{n,j}|$ gives the required estimate on $|\varphi_{n+1,j+1}(w) - \varphi_{n,j}(w)|$, and
completes the proof of Theorem \ref{squared}.
\end{proof}

\section{Vertical Fatou disks}

Again we consider skew-products of the form
$$
F(t,z) = (\frac{t}{\lambda}, g(t,z)),
$$
with $|\lambda| > 1$. Now we make the additional assumption that
\begin{equation}\label{condition2}
g(t,z) = p(z) + q(t),
\end{equation}
where $p(t) = \lambda t + h.o.t.$ and $q(t) = at + bt^2 + h.o.t.$ with $a \neq 0$. We assume that $F$ is a degenerate resonant skew-product, so that Theorem \ref{squared} holds. Finally we
assume that $p$ has a critical point $x_0$ of order at least $3$, for which $p^k(x_0) = 0$.

Let $w_0 \in \mathbb C$ be such that $\Phi(w_0) = x_0$, i.e. $\pi_2 \circ F^n(\frac{w_0}{\lambda^n}, x_0) \rightarrow x_0$ as $n \rightarrow \infty$. Such a $w_0$ might not be
unique but it can be found since $x_0$ necessarily lies in the Julia set of $p$, and hence not in the exceptional set of $p$. Therefore $x_0$ is contained in the union
$$
\bigcup_{n \in \mathbb N} p^n(U),
$$
for any neighborhood $U$ of the repelling fixed point $0 \in \mathcal{J}(p)$. The fact that $w_0$ exists now follows from the functional equation
$$
\Phi(\lambda \cdot w) = p(\Phi(w)).
$$
We will refer to the complex lines $\{t = \frac{w_0}{\lambda^n}\}$ as \emph{critical fibers}.

\begin{defn}\label{def:disks}
We refine the vertical disks $D_n$ as follows:
$$
D_n := \{ (\frac{w_0}{\lambda^n}, z) \mid |z - x_0| < |\lambda^{-3n/4}|\}.
$$
\end{defn}

We will prove that for $n$ sufficiently large, the forward orbits of the disks $D_n$ all avoid the bulged Fatou components of $F$.

\begin{lemma}\label{lemma:nested}
For $n$ sufficiently large we have that
$$
F^n(D_n) \subset D_{2n}.
$$
\end{lemma}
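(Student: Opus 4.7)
The plan is to compare the forward orbit of an arbitrary point $z$ in $D_n$ with the ``central'' orbit starting at $(w_0/\la^n, x_0)$. Writing
$$z_j := \pi_2 F^j(w_0/\la^n, z), \qquad x_j := \vp_{n,j}(w_0) = \pi_2 F^j(w_0/\la^n, x_0),$$
both orbits share the same $t$-coordinate at every step (because $F$ is a skew-product), and after $n$ iterations both sit in the fiber $\{t = w_0/\la^{2n}\}$, which is the fiber of $D_{2n}$. It therefore suffices to show that $|z_n - x_0| < |\la|^{-3n/2}$, and I would split this via the triangle inequality as $|z_n - x_0| \le |z_n - x_n| + |x_n - x_0|$.

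The second piece is immediate from Theorem \ref{squared}: picking any $\epsilon < 1/2$ and telescoping gives
$$|x_n - x_0| = |\vp_n(w_0) - \Phi(w_0)| \le \sum_{m \ge n} |\vp_{m+1}(w_0) - \vp_m(w_0)| \le C|\la|^{(\epsilon-2)n},$$
which is much smaller than $\tfrac12|\la|^{-3n/2}$ once $n$ is large. For the first piece, the skew-product structure yields $z_{j+1} - x_{j+1} = p(z_j) - p(x_j)$. The critical point assumption is applied at $j=0$: since $x_0$ is a critical point of $p$ of order at least $3$, one has $p(z) - p(x_0) = O((z-x_0)^4)$, whence
$$|z_1 - x_1| \le C|z - x_0|^4 \le C|\la|^{-3n}.$$
From $j \ge 1$ onwards the mean value inequality gives $|z_{j+1} - x_{j+1}| \le |p'(x_j)|\cdot|z_j - x_j| + O(|z_j - x_j|^2)$, and iterating yields
$$|z_n - x_n| \lesssim \Bigl(\prod_{j=1}^{n-1} |p'(x_j)|\Bigr) \cdot |z_1 - x_1|.$$
The key claim is that this product is at most $C|\la|^n$, which would give $|z_n - x_n| \le C|\la|^{n-3n} = C|\la|^{-2n}$, again smaller than $\tfrac12|\la|^{-3n/2}$ for large $n$.

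The main obstacle is therefore controlling the derivative product $\prod_{j=1}^{n-1}|p'(x_j)|$. The guiding picture is that $x_j = \vp_{n,j}(w_0)$ approximates the unperturbed orbit $p^j(x_0)$, which reaches $0$ at step $k$ and stays there. In the perturbed dynamics, the small input $q(w_0/\la^{n+j})$ is injected at each step and then amplified by the repelling multiplier $\la$ at $0$; quantitatively, as $n \to \infty$ with $n-j$ fixed, $x_j \to \Phi(w_0/\la^{n-j})$, and the functional equation together with $\Phi(0) = 0$ gives $|\Phi(w_0/\la^{n-j})| = O(|\la|^{-(n-j)})$. Hence for any fixed $L$, throughout the range $j \in [k, n-L]$ the iterate $x_j$ lies in a small fixed neighborhood of $0$, so $|p'(x_j)| \le |\la| + C|\la|^{-(n-j)}$; the remaining $O(1)$ iterates (for $j < k$ or $j > n-L$) contribute only a bounded multiplicative factor. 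Multiplying these bounds gives $\prod |p'(x_j)| \le C|\la|^n$ as claimed. One also has to verify by induction that $|z_j - x_j|$ remains small enough (say bounded by $|\la|^{j-1-3n}$ up to constants) that the quadratic corrections $O(|z_j - x_j|^2)$ are dominated by the linear term at every step; this is automatic given the initial smallness $|z_1 - x_1| \le C|\la|^{-3n}$. Combining the two estimates then yields $|z_n - x_0| < |\la|^{-3n/2}$, which is exactly the inclusion $F^n(D_n) \subset D_{2n}$.
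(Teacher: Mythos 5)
Your proof is correct and follows essentially the same strategy as the paper's: compress at step one using the order-$\ge 3$ critical point (radius drops from $|\la|^{-3n/4}$ to $O(|\la|^{-3n})$), control the subsequent expansion by observing the central orbit $x_j$ lies near the repelling fixed point $0$ for $k \le j \le n-L$, and bound the drift $|x_n-x_0|$ by telescoping Theorem~\ref{squared}. The only difference is quantitative: you exploit the decay $|x_j| = O(|\la|^{-(n-j)})$ to obtain the sharp product bound $\prod|p'(x_j)| \le C|\la|^n$, whereas the paper settles for the cruder $\epsilon$-neighborhood estimate $(|\la|+C\epsilon)^n \le |\la|^{5n/4}$, which is already more than enough given the cushion $|\la|^{-3n}$ vs.\ the target radius $|\la|^{-3n/2}$.
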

\begin{proof}
Since $x_0$ is a critical point of order at least $3$ and we assumed that $g(t,z) = p(z) + q(t)$, it follows that $F(D_n)$ is contained in a vertical disk centered at the
point
$F(\frac{w_0}{\lambda^n},x_0)$ of radius $M_1 |\lambda^{-3n}|$, for some constant $M_1>0$ which is uniform over $n \in \mathbb N$.

For any $\epsilon >0$ there exists an $m$ independent of $n$ so that for $j = k, \ldots , n-m$ the $z$-coordinates of the images $F^j(D_n)$ lie in the $\epsilon$-neighborhood
of the repelling fixed point $0 = p(0)$ with multiplier $\lambda$.

Hence by choosing $\epsilon>0$ small enough we see that for $n$ large enough the image $F^{n}(D_n)$ is contained in the disk centered at $F^n(\frac{w_0}{\lambda^n}, x_0)$ of
radius
$$
M_2 |\lambda^{-3n}|\cdot |\lambda|^{\frac{5n}{4}} = M_2 |\lambda|^{-\frac{7n}{4}}.
$$
The first coordinate of $F^n(\frac{w_0}{\lambda^n}, x_0)$ is $\frac{w_0}{\lambda^{2n}}$, and the distance to $x_0$ of the second coordinate is at most $M_3
|\lambda|^{(\epsilon
- 2)n}$ by Theorem \ref{squared}. Hence by the definition of $D_{2n}$ we have that for $n$ large enough the image $F^n(D_n)$ is contained in the disk $D_{2n}$.
\end{proof}

\begin{remark}
An immediate consequence of Lemma \ref{lemma:nested} is that for sufficiently large $n \in \mathbb N$ we have that $F^{(2^\ell-1)n}(D_n) \rightarrow (0,x_0)$ as $\ell
\rightarrow \infty$.
\end{remark}

Our main result is now a quick consequence.

\begin{thm}\label{disks}
There exists an $N \in \mathbb N$ so that for every $n \ge N$ the forward images of the disks $D_n$ never intersect the bulged Fatou-components of $F$.
\end{thm}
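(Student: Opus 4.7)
My plan combines iteration of Lemma~\ref{lemma:nested} with a forward-invariance/pigeonhole argument. The first step is the observation already noted in the preceding remark: by induction on $\ell$,
$$
F^{(2^\ell-1)n}(D_n)\subset D_{2^\ell n}\qquad\text{for every }\ell\ge 0,
$$
via $F^{(2^{\ell+1}-1)n}(D_n)=F^{2^\ell n}\bigl(F^{(2^\ell-1)n}(D_n)\bigr)\subset F^{2^\ell n}(D_{2^\ell n})\subset D_{2^{\ell+1}n}$. Together with the definitions of the $D_N$ this yields $F^{(2^\ell-1)n}(D_n)\to(0,x_0)$.

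Next I would argue by contradiction: suppose some $F^j(D_n)$ meets a bulged Fatou component $\Omega$, and let $U:=\Omega\cap\{t=0\}$ be its associated Fatou component of $p$. The class of bulged Fatou components is forward invariant under $F$, because the Fatou component of $F$ containing $F(\Omega)$ has its $\{t=0\}$-slice containing $p(U)$ and so is itself bulged. Sullivan's non-wandering theorem applied to $p$ makes $U$, and hence $\Omega$, preperiodic, so $\{F^m(\Omega)\}_{m\ge 0}$ consists of only finitely many distinct bulged Fatou components $\Omega_1,\dots,\Omega_r$. For every $\ell$ large enough that $(2^\ell-1)n\ge j$,
$$
\emptyset\ne F^{(2^\ell-1)n-j}\bigl(F^j(D_n)\cap\Omega\bigr)\subseteq F^{(2^\ell-1)n}(D_n)\cap F^{(2^\ell-1)n-j}(\Omega)\subseteq D_{2^\ell n}\cap\Omega_{i_\ell}
$$
for some index $i_\ell\in\{1,\dots,r\}$. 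The pigeonhole principle isolates one component $\Omega_*$ with $D_{2^\ell n}\cap\Omega_*\ne\emptyset$ for infinitely many $\ell$; picking witnesses $p_\ell$ and letting $\ell\to\infty$ forces $(0,x_0)\in\overline{\Omega_*}$.

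The hardest part, where I expect the main obstacle, is converting this accumulation into a contradiction. Since $x_0\in J(p)$ while $U_*:=\Omega_*\cap\{t=0\}$ is a Fatou component of $p$, we have $(0,x_0)\notin\Omega_*$, hence $(0,x_0)\in\partial\Omega_*$ and $x_0\in\partial U_*$. I would close the argument by quantifying the incompatibility between the rate at which the chosen witnesses $p_\ell$ approach the $z$-value $x_0$ inside the finitely many possible components $\Omega_*$ and the rate built into the definition of $D_{2^\ell n}$: the $z$-radius is $|\lambda|^{-3\cdot 2^\ell n/4}$ while $|t|=|w_0|\cdot|\lambda|^{-2^\ell n}$, and a bulged Fatou component cannot pinch toward a boundary point of its $\{t=0\}$-slice faster than some uniform polynomial rate in $|t|$ across the finitely many $\Omega_1,\dots,\Omega_r$, provided by hyperbolic/Lilov-type estimates for the slices near the invariant fiber. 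The fact that $x_0$ is a critical point of $p$ of order at least three should enter here, since the high local multiplicity of $p$ at $x_0$ makes the pullbacks of nearby Fatou components too small to intersect $D_{2^\ell n}$ once $n$ is large. Making this quantitative comparison precise yields the threshold $N$ in the statement.
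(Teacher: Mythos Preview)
Your setup through the pigeonhole step is sound, but the final ``rate comparison'' is a genuine gap. The claim that a bulged Fatou component cannot pinch toward a boundary point of its $\{t=0\}$-slice faster than some uniform polynomial rate in $|t|$ is neither proved nor cited, and in fact nothing like a Lilov-type geometric estimate is available here: the whole point of the paper is that we are in the geometrically attracting regime where such estimates fail. Moreover, even if a polynomial lower bound on the vertical thickness of $\Omega_*$ were true, it would not help: the disks $D_{2^\ell n}$ have $z$-radius $|\lambda|^{-3\cdot 2^\ell n/4}$ while $|t|=|w_0|\,|\lambda|^{-2^\ell n}$, so a pinching rate $|t|^{3/4}$ is perfectly compatible with $D_{2^\ell n}\cap\Omega_*\neq\emptyset$. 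The order of the critical point does not enter the argument at this stage at all.

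The paper's proof avoids all of this by tracking a \emph{single orbit} rather than a sequence of varying witnesses $p_\ell$. If $q\in F^j(D_n)\cap\Omega$, then for every large $\ell$ one has
\[
F^{(2^\ell-1)n-j}(q)\in F^{(2^\ell-1)n}(D_n)\subset D_{2^\ell n},
\]
so the single orbit $\{F^m(q)\}_m$ has a subsequence converging to $(0,x_0)$. Now invoke the classification of periodic Fatou components of $p$: the orbit of $q$ eventually lies in a periodic bulged component whose slice $U_*$ is an attracting basin, a parabolic basin, or a Siegel disk. Together with the attraction in the $t$-direction, the $\omega$-limit set of $q$ is contained in $\{0\}\times\overline{U_*}$ and is either an attracting or parabolic periodic cycle, or the closure of an invariant circle inside the Siegel disk. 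In no case can it contain the point $(0,x_0)$, since $x_0\in J(p)$ is strictly pre-periodic (it maps to the repelling fixed point $0$ but $p'(0)=\lambda\neq 0$ forces $x_0\neq 0$). This is the missing idea: the contradiction is dynamical, coming from the classification, not geometric.
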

\begin{proof}
Notice that $F^{j_l}(D_n) \rightarrow (0,x_0)$ implies that the images $F^{j}(D_n)$ cannot intersect one of the bulged Fatou components of $F$. This follows from the fact that
the Fatou components of $p$ are (pre-)periodic, and the classification of periodic Fatou components. Each periodic Fatou component of $p$ is either an attracting basin, a
parabolic basin or a Siegel disk, hence there can never be a subsequence for which an orbit in a Fatou component converges to a non-periodic point in the Julia set. It follows
immediately from the attraction in the horizontal direction that the same holds for the corresponding bulged Fatou components of $F$.
\end{proof}

\begin{lemma}
For each sufficiently large $n \in \mathbb N$ the forward images $F^j(D_n)$ accumulate at a closed and invariant subset of the form
$$
\Omega = \{x_{-\ell}\}_{\ell=-k}^{+\infty},
$$
where
\begin{equation}\label{conditions}
p(x_{-\ell}) = x_{-\ell+1} \; \; \;  \mathrm{and} \; \; \; \lim_{\ell \rightarrow \infty} x_{-\ell} = 0.
\end{equation}
Moreover, for each choice of a set $\Omega$ satisfying the conditions in \eqref{conditions} we can find a vertical disk with its $\omega$-limit set equal to $\Omega$.
\end{lemma}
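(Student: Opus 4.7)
The plan is to pin down the $\omega$-limit set by tracking the orbit of $D_n$ at the landmark times $(2^\ell-1)n$ supplied by Lemma~\ref{lemma:nested}, and at small forward or backward offsets from these times, and then to realize an arbitrary admissible $\Omega$ by choosing the parameter $w_0$ in the definition of $D_n$ appropriately.

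First I would observe that the $t$-coordinate of $F^j(D_n)$ equals $w_0/\lambda^{n+j}\to 0$, so the $\omega$-limit set sits in the invariant fiber $\{t=0\}$, on which $F$ acts as $p$; this reduces the problem to identifying a subset $\omega^*$ of $\mathbb C_z$. The remark following Lemma~\ref{lemma:nested} gives $x_0\in\omega^*$, and then applying $F^s$ with $s$ fixed as $\ell\to\infty$ places $x_s=p^s(x_0)\in\omega^*$ for every $s\ge 0$, yielding the forward piece $x_0,x_1,\ldots,x_k=0$. For the backward piece I would fix $r\ge 1$ and look at the times $(2^{\ell+1}-1)n-r$: by Lemma~\ref{lemma:nested} applying $F^r$ to the image sends it into the tiny disk $D_{2^{\ell+1}n}$ around $(0,x_0)$, so the $z$-coordinates must accumulate at a preimage of $x_0$ under $p^r$. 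Because the preceding iterates are confined to a small neighborhood of the repelling fixed point $0$ (see the proof of Lemma~\ref{lemma:nested}) and $p$ has a unique analytic inverse branch there, the preimages selected form a coherent backward orbit $x_{-1},x_{-2},\ldots\to 0$, which I would place in $\omega^*$. For the reverse inclusion I would decompose each $j$ as $(2^\ell-1)n+s$ with $0\le s<2^\ell n$ and argue that the only possible subsequential limits are either $x_s$ (when $s$ stays bounded), some $x_{-r}$ (when $r=2^\ell n-s$ stays bounded), or $0=x_k$ (when both are unbounded, corresponding to the long linearized escape phase during which the orbit is trapped near $0$). Closedness and $p$-invariance of $\Omega$ are then automatic.

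For the moreover claim, given admissible $\Omega$ I would invoke the Koenigs map $\Phi$ from Theorem~\ref{skewkoenigs}: since $\Phi(0)=0$ and $\Phi'(0)\neq 0$, $\Phi$ admits a local analytic inverse near $0$, so for all sufficiently large $\ell$ I may set $\tilde w_{-\ell}=\Phi^{-1}(x_{-\ell})$. The functional equation $\Phi(\lambda w)=p(\Phi(w))$ together with local injectivity of $\Phi$ near $0$ forces $\tilde w_{-\ell}=\lambda\tilde w_{-\ell-1}$; setting $w_0=\lambda^M\tilde w_{-M}$ for any sufficiently large $M$ then gives $\Phi(w_0/\lambda^\ell)=x_{-\ell}$ for every $\ell\ge 0$ by a telescoping argument using the functional equation. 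Rebuilding the family $D_n$ from this $w_0$ and applying the first part of the lemma produces a vertical disk whose $\omega$-limit set is exactly $\Omega$.

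The main technical step is the reverse-inclusion argument: ruling out extra accumulation points during the long escape phase will require combining the geometric shrinking of $D_{2^\ell n}$ with the linearized dynamics of $p$ near $0$ to squeeze intermediate iterates either very close to $0$ or near a specific $x_{-r}$, and this is where the estimates from Theorem~\ref{squared} enter essentially.
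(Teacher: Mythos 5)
Your proposal is correct and follows essentially the same route as the paper: both identify the backward orbit $x_{-\ell}=\Phi(w_0/\lambda^\ell)$ by examining times just before the landmarks $(2^\ell-1)n$ via the functional equation \eqref{eq:functional} (equivalently, the unique inverse branch of $p$ near the repelling fixed point $0$), and both realize an arbitrary admissible $\Omega$ by solving $\Phi(w/\lambda^N)=x_{-N}$ near the origin using $\Phi(0)=0$ together with local injectivity. The only small difference is that the uniform control needed for the reverse inclusion is already packaged in Lemma~\ref{lemma:nested} (whose proof is where Theorem~\ref{squared} actually enters), so you need not invoke Theorem~\ref{squared} separately here.
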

\begin{proof}
Recall that for each sufficiently large $n$ the image $F^n(D_n)$ is contained in the disk $D_{2n}$.  By the functional equation \eqref{eq:functional} it follows that
$$
\pi_2 F^{2^j n -\ell}(D_{2^j n})
$$
converges to a point $x_{-\ell}$ as $j \rightarrow \infty$. Hence the $\omega$-limit set contains the required sequence $\{x_{-\ell}\}$ and the point $0$ which lies in the
closure. To see that the $\omega$-limit set contains no other points, notice that the images
$$
\pi_2 F^{2^j n -\ell}(D_{2^j n})
$$
converge to $0$ as $\ell \rightarrow \infty$, uniformly over all $j$ as long as $2^{j-1}n > \ell$.

\medskip

For the converse, let $\{x_n\}$ be an inverse orbit of $p$ which converges to the repelling fixed point $0$. Since $p^\prime(0) \neq 0$, the function $p$ is injective in a
small disk $U = U(0)$. Let $N \in \mathbb N$ be such that $x_n \in U$ for $n \ge N$, which implies that $x_{-N}$ determines the entire inverse orbit.

The holomorphic map $\Phi$ satisfies $\Phi(0) = 0$. Hence we can find $w \in \mathbb C$ so that $\Phi(\frac{w}{\lambda^N}) = x_{-N}$ and so that $\Phi(\frac{w}{\lambda^{N+j}})
\in U$ for all $j \in \mathbb N$. The claim follows.
\end{proof}

\begin{remark}\label{example}
It is not hard to generate many explicit examples of degenerate resonant skew-products for which $p$ has a critical point $x_0$ of order at least $3$ with $p^k(x_0) = 0$. Consider for example the
polynomials $p(z) = 2(z+1)^d - 2$, for $d \ge 4$ even. Here the multiple critical point $z=-1$ of order $d-1$ is mapped in two steps to the repelling fixed point $z=0$. Since
the derivative $p^\prime(0) = 2d$, we can consider skew products of the form
$$
F(t,z) = (\frac{t}{2d}, 2(z+1)^d -2 + at + bt^2).
$$
Given $a \neq 0$ the value for $b$ can be explicitly computed. For example if $d=4$ and $a = 1$ then computation shows that we should take $b = -\frac{641}{4165}$ in order for
$F$ to be a degenerate resonant skew-product. Two computer pictures of vertical fibers for this example are shown in Figure 1. On the left a critical fiber $t = w_0$ with a clearly visible Fatou disk. On the right a nearby fiber $\{t=t_0\}$, for $t_0 \sim w_0 + 0.001$, where the one-dimensional filled Julia set
$$
\{ z \in \mathbb C \mid \{F^n(t_0, z)\} \; \; \mathrm{bounded} \; \}
$$
seems to have empty interior. This suggests that the Fatou disk lies in the Julia set of the $2$-dimensional map, which we prove in the next section.
\end{remark}

\begin{figure*}\label{pictures}
\centerline{%
\includegraphics[height=6cm]{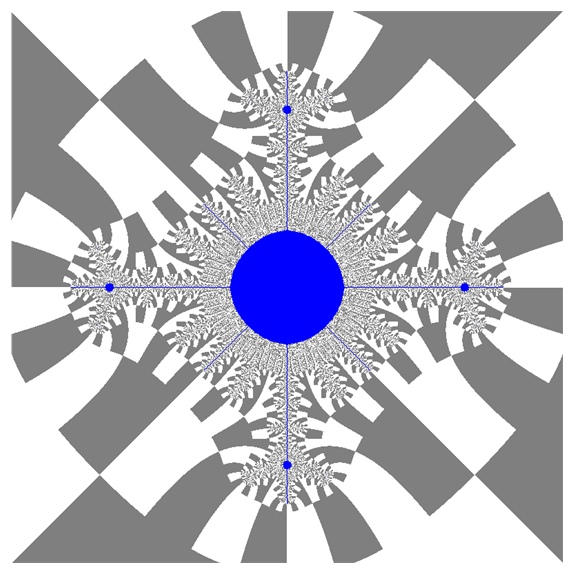}
\includegraphics[height=6cm]{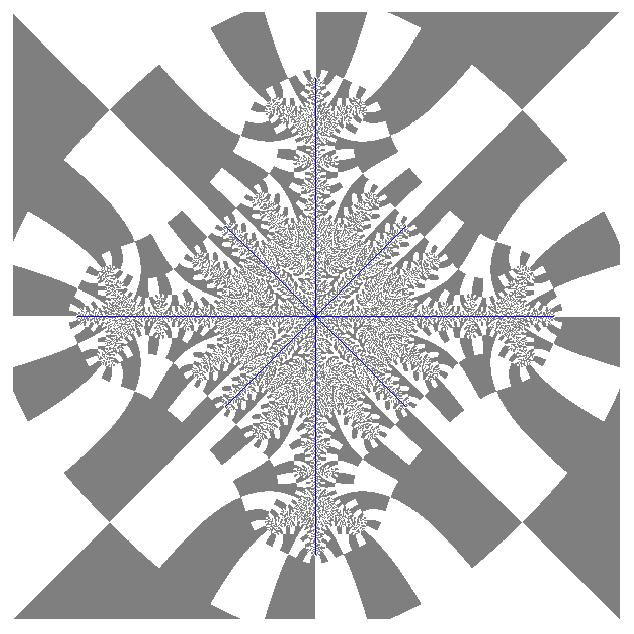}
}%
\caption{A vertical Fatou disk and a nearby fiber}
\end{figure*}

The notion of a Fatou-map was introduced by Ueda in \cite{Ueda2008}.

\begin{defn}
Let $f: X \rightarrow X$ be a holomorphic endomorphism of a complex manifold $X$. A holomorphic disks $D \subset  X$ is a \emph{Fatou disk} for $f$ if the restriction of
$\{f^n\}$ to the disk $D$ is a normal family.
\end{defn}

\begin{lemma}
The disks $D_n$ are Fatou disks.
\end{lemma}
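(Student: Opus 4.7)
The plan is to show normality of the family $\{F^j|_{D_n}\}_{j \ge 0}$ by verifying that the union of forward images $\bigcup_{j \ge 0} F^j(D_n)$ is contained in a bounded region of $\mathbb{C}^2$, which by Montel's theorem in several variables (maps into a bounded domain form a normal family) will immediately give that $D_n$ is a Fatou disk.

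First I would iterate Lemma \ref{lemma:nested}: since $F^n(D_n) \subset D_{2n}$ for $n$ large, applying the same lemma to $D_{2n}$ yields $F^{2n}(D_{2n}) \subset D_{4n}$, and inductively
\[
F^{(2^\ell - 1)n}(D_n) \subset D_{2^\ell n}.
\]
The nested disks $D_{2^\ell n}$ contract to the point $(0, x_0)$ as $\ell \to \infty$, so the ``checkpoint'' images are bounded; what remains is to control the iterates at times between consecutive checkpoints.

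For this I would reuse the geometric estimate contained in the proof of Lemma \ref{lemma:nested}, applied to the disk $D_{2^\ell n}$ of radius $|\lambda|^{-3 \cdot 2^\ell n/4}$ in place of $D_n$. That argument shows that, for each $j = 0, 1, \ldots, 2^\ell n$, the intermediate image $F^j(D_{2^\ell n})$ is a vertical disk whose $t$-coordinate is $w_0/\lambda^{2^\ell n + j}$ (hence uniformly bounded by $|w_0|$) and whose $z$-coordinate either lies in a fixed small neighborhood of the repelling fixed point $0$ (for most $j$) or within a uniformly bounded distance of a point in the finite forward orbit $\{p^i(x_0)\}$ (for the boundary range of $j$). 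In particular the radii at all intermediate times are smaller than the radius $|\lambda|^{-3 \cdot 2^\ell n/4}$ of $D_{2^\ell n}$ itself, so the images are contained in a fixed compact set $K \subset \mathbb C^2$ independent of $\ell$, namely a bounded neighborhood of $\{0\} \times \big(\{0\} \cup \{p^i(x_0) : 0 \le i \le k\}\big)$ together with the finitely many initial images $F^j(D_n)$ for $j \le (2^{\ell_0}-1)n$ with some fixed $\ell_0$.

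Combining the two observations, $\bigcup_{j \ge 0} F^j(D_n)$ is contained in a bounded subset of $\mathbb C^2$. By Montel's theorem the family $\{F^j|_{D_n}\}$ is normal, so $D_n$ is a Fatou disk. The only mildly delicate point in this plan is bookkeeping the intermediate-time estimate uniformly across all the scales $2^\ell n$; but since the bound in Lemma \ref{lemma:nested} has exactly the same form for every such scale (it only uses that the $z$-radius is at most $|\lambda|^{-3N/4}$ and that the $t$-coordinate is $w_0/\lambda^N$), this is routine rather than a genuine obstacle.
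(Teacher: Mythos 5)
Your proposal is correct and takes essentially the same approach as the paper: boundedness of the orbit plus Montel's theorem. The paper's own proof is a single sentence (the $t$-coordinates tend to $0$ and the $z$-coordinates stay bounded, so Montel applies); your argument simply spells out in detail why the $z$-coordinates are bounded by iterating Lemma~\ref{lemma:nested}, which the paper leaves implicit.
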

\begin{proof}
The $t$-coordinates converge to zero and the $z$-coordinates stay bounded. Hence normality follows by Montel's Theorem.
\end{proof}

\section{The disks lie in the Julia set}

As in the previous section we consider a degenerate resonant skew-product of the form
$$
F(t,z) = (\frac{t}{\lambda}, p(z) + q(t)).
$$
As before, we assume that $p$ has a the critical point $x_0$ of order
$s
\ge 3$ which by $p^k$ is mapped to the repelling fixed point $0 = p(0)$ with multiplier $\lambda$. In this section we make the additional assumption that $p$ has no
critical points besides $x_0$. Note that the examples mentioned in Remark \ref{example} satisfy this assumption.

We let $D_n$ be the vertical disk centered at $(\frac{w_0}{\lambda^n},x_0)$ as defined in \ref{def:disks}. Recall from Lemma \ref{lemma:nested} that there exists an $N \in \mathbb N$ for which $F^n(D_n) \subset D_{2n}$ for all $n \ge N$.

\begin{thm}\label{thm:Julia}
For $n \ge N$ the disks $D_n$ lie entirely in the Julia set of $F$.
\end{thm}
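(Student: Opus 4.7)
The plan is to argue by contradiction. Suppose some $\zeta_0 \in D_n$ lies in the Fatou set $\mathcal{F}(F)$; then there is a two-dimensional open neighborhood $B$ of $\zeta_0$ on which $\{F^m\}$ is a normal family into $\mathbb{P}^2$. I will exhibit a nonempty open set $V \subset B$ on which $\|F^m\| \to \infty$ uniformly; combined with the bounded orbit at $\zeta_0 \in D_n$, this contradicts normality, since any subsequential limit $f : B \to \mathbb{P}^2$ would take the value $\infty$ on $V$ while $f(\zeta_0)$ is finite, and this is impossible as the preimage of the line at infinity under $f$ is at most one-dimensional. Write $G_m(t,z) := \pi_2 F^m(t,z)$ and $t^* := w_0/\lambda^n$, so $\zeta_0 = (t^*, z^*)$.

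Fix $\delta \in \mathbb{C}$ small and nonzero with $\zeta := (t^* + \delta, z^*) \in B$. By iterating Lemma \ref{lemma:nested}, $F^{T_k}(\zeta_0) \in D_{2^k n}$ at $T_k := n(2^k - 1)$, so the nominal $z$-coordinate stays within $r_k := |\lambda|^{-3 \cdot 2^k n/4}$ of $x_0$. To leading order $|G_{T_k}(\zeta) - G_{T_k}(\zeta_0)| \approx |\delta| \cdot |A_{T_k}|$, where $A_m := \partial_t G_m(\zeta_0)$ satisfies $A_0 = 0$ and the recursion $A_{m+1} = p'(G_m(\zeta_0)) A_m + q'(t^*/\lambda^m)/\lambda^m$. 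Along the orbit of $\zeta_0$ one has $p'(G_m(\zeta_0)) \approx \lambda$ in the near-$0$ bulk of each cycle and $|p'(G_{T_k}(\zeta_0))| = O(r_k^{s-1})$ at each critical visit near $x_0$; a careful bookkeeping then shows that $|A_{T_k}|$ stays bounded (of order $|\lambda|^n$) and does not decay to zero. Hence $|\delta| \cdot |A_{T_k}|$ is bounded below by a positive constant while $r_k$ decays doubly exponentially, so $F^{T_K}(\zeta) \notin D_{2^K n}$ for some $K = K(\delta)$ sufficiently large.

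Once the orbit has exited the nested structure at time $T_K$, the super-exponential shrinkage of the $D_{2^k n}$ — possible only because the $s$-th order critical contraction at $x_0$ exactly compensates the near-$0$ expansion of factor $|\lambda|^{n \cdot 2^k}$ accumulated over each cycle — breaks: over the next $n \cdot 2^K$ iterations the deviation is amplified by at least $|\lambda|^{n \cdot 2^K /2}$, eventually moving the $z$-coordinate outside the filled Julia set $K(p)$. Since $p$ is postcritically finite with $x_0$ its unique critical point, $K(p) = \mathcal{J}(p)$ and any $p$-orbit leaving $K(p)$ escapes to infinity; combined with the $t$-coordinate contracting to zero, this gives $\|F^m(\zeta)\| \to \infty$. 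Since the escape set is open, taking $V$ to be a small neighborhood of $\zeta$ on which the escape is uniform yields the required contradiction.

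As $\zeta_0 \in D_n$ was arbitrary, $D_n \subset J(F)$. The main technical obstacle is the third step: after the perturbed orbit first leaves the nested disks, rigorously tracking it until it genuinely escapes to infinity. This relies essentially on the postcritical finiteness of $p$ and the uniqueness of its critical point $x_0$, which together ensure that the invariant set in the $t = 0$ fiber near the critical orbit is thin enough that deviations are pushed into the basin of infinity of $p$ rather than into another trap.
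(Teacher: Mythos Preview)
Your overall strategy---argue by contradiction, pick a nearby point, and show its orbit escapes to infinity---is genuinely different from the paper's, and the escape step is where it breaks. The paper instead exploits the normality hypothesis far more directly: assuming $(F^j)$ is normal on a neighborhood $U$ meeting $D_N$, the diameters of $F^j(U)$ must tend to zero (any subsequential limit maps $U$ into $\{0\}\times\mathcal J(p)$, which has empty interior, forcing constancy by the open mapping theorem). This is the crucial move: it \emph{pins} the orbit of the nearby point $(v/\lambda^N,z_N)$ to the orbit of $D_N$, so $z_{2^\ell N}$ stays close to $x_0$. On the other hand, the Koenigs picture gives $\pi_2 F^{2^\ell N}(v/\lambda^{2^\ell N},x_0)\to\Phi(v)=y\neq x_0$; combined with the order-$s$ critical flatness this forces the \emph{lower} bound $|z_{2^\ell N}-x_0|\gtrsim|\lambda|^{-2^\ell N/s}$. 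That feeds into a lower bound on $|p'(z_{2^\ell N})|$, and since all remaining factors $|p'(z_j)|$ are essentially $|\lambda|$ (no other critical points), the product $\partial_z F_2^j$ blows up along this orbit---a direct contradiction to normality. No escape to infinity is ever invoked.

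Your argument, by contrast, never gets the perturbed orbit under control. Even granting $|A_{T_k}|\sim|\lambda|^n$ (your heuristic is in fact correct: the dominant contribution to $A_{T_{k+1}}$ is the additive $q'$-term injected just after the critical hit at $T_k$, amplified by $\sim\lambda^{n2^k}$, giving $\sim a\lambda^{n-1}$ independent of $k$), the linearization $G_{T_k}(\zeta)-G_{T_k}(\zeta_0)\approx\delta A_{T_k}$ is only valid for $\delta$ small in a $k$-dependent way, so fixing $\delta$ and sending $k\to\infty$ requires justification you have not supplied. More seriously, once the orbit exits the nested disks you assert it is ``amplified by at least $|\lambda|^{n2^K/2}$'' and then leaves $K(p)$; but the orbit is no longer shadowing anything near $0$, so you have no grounds for $|\lambda|$-expansion, and nothing prevents the two-dimensional orbit from wandering along $\mathcal J(p)$ indefinitely (postcritical finiteness gives $K(p)=\mathcal J(p)$, but not that perturbed orbits must leave it). The paper sidesteps all of this by using normality to force the nearby orbit to remain close to the nested disks, and then deriving a derivative contradiction rather than an escape contradiction.
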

\begin{proof}
We will prove that the disk $D_N$ is entirely contained in the Julia set of $F$, which implies the same statement for all $n \ge N$.

Suppose for the purpose of a contradiction that the disk $D_N$ intersects the Fatou set of $F$. Then any such intersection point is contained in an open neighborhood $U$ which
is relatively compact in the Fatou set, and on which $(F^j)_{j \in \mathbb N}$ is a normal family. Recall that the $\omega$-limit set of the disk $D_n$ is contained in the
Julia set of $p$, which has no interior. Therefore the diameter of $F^j(U)$ must converge to $0$ as $j \rightarrow \infty$.

Let $v \in \mathbb C$ with $v \neq w_0$ for which $U$ intersects the line $\{t=\frac{v}{\lambda^N}\}$, say in the point $(t_N,z_N) = (\frac{v}{\lambda^N},z_N)$. Since $\Phi$
is
non-constant we may assume that $\Phi(v) \neq \Phi(w_0)$. Let us write $y = \Phi(v)$, and $(t_{N+j},z_{N+j}) = F^j(t_N,z_N)$. Let $\epsilon >0$ small enough so that $|y - x_0|
> 3\epsilon$.

Since the diameter of $F^j(U)$ converges to zero, there exists a $J_1 \in \mathbb N$ so that $\mathrm{diam}(F^j(U)) < \epsilon$ for $j \ge J_1$. Since $F^n(D_n) \subset
D_{2n}$
for all $n \ge N$ and the disks $D_n$ converge to the point $(0,x_0)$, it follows that there exists an $L \in \mathbb N$ so that for $\ell \ge L$ one has that $F^{(2^\ell -
1)N}(D_N)$ is contained in the ball centered at $(0,x_0)$ with radius $\epsilon$. It follows that for $\ell \ge L$ and $j = (2^\ell -1)N \ge J_1$ we have that
$$
d(F^j(U), (0,y)) > \epsilon.
$$
Recall that $\pi_2 \circ F^j(\frac{v}{\lambda^j},x_0)$ converges to $y$, hence there exists $J_2 \ge J_1$ so that for $j \ge J_2$ we have that
$$
d(F^j(\frac{v}{\lambda^j},x_0), (0,y)) < \frac{\epsilon}{2}.
$$
It follows that
\begin{equation}\label{eq:number9}
\|(t_{2^{\ell+1}N}, z_{2^{\ell+1}N}) -  F^{2^\ell n}(t_{(2^\ell)N}, x_0) \| > \frac{\epsilon}{2}.
\end{equation}
We also have
$$
(t_{2^{\ell+1}N}, z_{2^{\ell+1}N}) = F^{2^\ell N}(t_{2^\ell N}, z_{ 2^\ell N}),
$$
and $x_0$ is a critical point of $p$ of order $s-1$, which implies that $|p(z) - p(x_0)| \le M|z-x_0|^s$ for $|z - x_0|$ small and certain $M>0$. Choosing
$|\tilde{\lambda}| > |\lambda|$ we therefore obtain
\begin{equation}\label{eq:number10}
\|z_{2^{\ell+1}N} - \pi_2 F^{2^\ell n}(t_{(2^\ell)N}, x_0)\| \le |\tilde{\lambda}|^{2^\ell N} \cdot M \cdot |z_{2^\ell N} - x_0|^s,
\end{equation}
for $n$ large enough. Combining Equations \eqref{eq:number9} and \eqref{eq:number10} we obtain
\begin{equation}\label{eq:distance}
|z_{2^\ell N} - x_0| \ge \delta_0 |\tilde{\lambda}|^{-\frac{2^\ell N}{s}},
\end{equation}
for certain $\delta_0 > 0$.

Equation \eqref{eq:distance} induces an estimate from below on $|p^\prime(z_{2^\ell N})|$. Since $x_0$ is a critical point of order $s-1$, there exists an $\delta_1>0$ so that
$$
|p^\prime(z)| \ge \delta_1 \cdot |z-x_0|^{s-1}
$$
whenever $|z - x_0|$ is sufficiently small. It follows that
$$
|p^\prime(z_{2^\ell N})| \ge \delta_2 \cdot |\tilde{\lambda}|^{\frac{1-s}{s}2^\ell N}.
$$
Note that
$$
DF^j(t_n, z_n) = \left[
\begin{matrix}
f^{\prime}(t_{n+j-1}) & 0\\
q^\prime (t_{n+j-1}) & p^\prime (z_{n+j-1})
\end{matrix}\right]
\cdots
\left[
\begin{matrix}
f^{\prime}(t_n) & 0\\
q^\prime (t_n) & p^\prime (z_n)
\end{matrix}\right]
$$
Writing $F^j = (F_1^j, F_2^j)$ it follows that
$$
\frac{\partial F_2^j}{\partial z}(t_n,z_n) = \prod_{i=0}^{j-1} p^\prime(z_{n+i}).
$$
We have that the $\omega$-limit set of each $D_n$ contains only finitely many points that lie outside a given neighborhood of the origin. Hence given a $\delta_3>0$, the image
$\pi_2 \circ F^j(D_N)$ lies in the $\frac{\delta_3}{2}$-neighborhood of $0$ for all but the last finitely many $2^\ell N + k \le j < 2^{\ell+1} N$, where the number of $j$'s
for which the image may not lie in the $\frac{\delta_3}{2}$-neighborhood is bounded independently of $\ell$. By taking $\epsilon < \frac{\delta_3}{2}$ we have that $\pi_2\circ
F^j(t_N,z_N) = z_{N+j}$ therefore lies in the $\delta_3$-neighborhood of $0$ for all but finitely many $2^\ell N  + k \le j < 2^{\ell+1} N$. Recall that we have assumed that
$p$ has no critical points besides $x_0$. Hence given $|\hat{\lambda}| < |\lambda|$ we can choose $\delta_3>0$ and $\epsilon>0$ so that there exists an $\delta_4 >0$ for which
$$
\prod_{j= 2^\ell N+1}^{2^{l+1} N } |p^\prime(z_j)| \ge \delta_4 |\hat{\lambda}|^{2^\ell N}.
$$
If we choose $\hat{\lambda}$ and $\tilde{\lambda}$ close enough to $\lambda$ then it follows that
$$
|\frac{\partial F_2^{2^\ell N}}{\partial z}(t_{2^\ell N},z_{2^\ell N})| \ge \delta_4 |\hat{\lambda}|^{2^\ell N} \cdot \delta_2 |\tilde{\lambda}|^{\frac{1-s}{s} 2^\ell N} =
\delta_5 |\lambda|^{2^\ell\frac{N}{2s}},
$$
whose product over all $\ell$ converges to infinity. Therefore $(F^j)$ cannot be a normal family in any neighborhood of $(t_N,z_N)$, which contradicts our initial assumption.
This completes the proof.
\end{proof}

\end{document}